\newcommand{\R}{\mathbb R}
\newcommand{\N}{\mathbb N}
\newcommand{\Z}{\mathbb Z}
\newcommand{\E}{\mathbb E}
\newtheorem{theorem}{Theorem}[section]
\newtheorem{lemma}[theorem]{Lemma}
\newtheorem{corollary}[theorem]{Corollary}
\newtheorem{proposition}[theorem]{Proposition}
\theoremstyle{definition}
\theoremstyle{remark}
\newtheorem{remark}[theorem]{Remark}
\newcommand{\rd}{{\mathbb R^d}}
\newcommand{\rr}{{\mathbb R}}
\newcommand{\rev}[1]{{\color{black} #1}}
\newcommand{\revnew}[1]{{\color{black} #1}}
\let\realItem\item 
\NewDocumentCommand\myItem{ o }{%
   \IfNoValueTF{#1}%
      {\realItem}
      {\realItem[#1]\def\@currentlabel{#1}}
}
\setlist[enumerate]{
    before=\let\item\myItem,       
    label=\textnormal{(\arabic*)}, 
    widest=(2')                    
}
\date{\today}
\begin{document}

\sloppy

\title[Existence and regularity of local time]{On the existence and regularity of local times} 

\author{Tommi Sottinen}
\address{Tommi Sottinen, School of Technology and Innovations, University of Vaasa, PO Box 700, FI-65101 Vaasa, Finland}
\email{tommi.sottinen\@@{}uwasa.fi}

\author{Ercan S\"onmez}
\thanks{ES acknowledges financial support from the University of Klagenfurt}
\address{Ercan S\"onmez, Faculty of Mathematics, Ruhr University Bochum, 44780 Bochum, Germany}
\email{ercan.soenmez\@@{}rub.de}

\author{Lauri Viitasaari}
\address{Lauri Viitasaari, Aalto University School of Business, Department of Information and Service Management, PO Box 21210, 00076 Aalto, Finland}
\email{lauri.viitasaari\@@{}aalto.fi}

\begin{abstract}
We study the existence and regularity of local times for general $d$-dimensional stochastic processes. We give a general condition for their existence and regularity properties. To emphasize the contribution of our results, we show that they include various prominent examples, among others solutions to stochastic differential equations driven by fractional Brownian motion, where the behavior of the local time was not fully understood up to now and remained as an open problem in the stochastic analysis literature. In particular this completes the picture regarding the local time behavior of such equations, above all includes all possible dimensions and Hurst parameters. As other main examples, we also show that by using our general approach, one can quite easily cover and extend some recently obtained results on the local times of the Rosenblatt process and Gaussian quasi-helices.
\end{abstract}

\keywords{Local time; regularity; fractional Brownian motion; Gaussian quasi-helices; Rosenblatt process; stochastic differential equations; Malliavin calculus}
\subjclass[2020]{60J55; 60H07; 65C30; 60G15; 60G17}
\maketitle

\allowdisplaybreaks

\section{Introduction}
The local time of a stochastic process is the Radon-Nikodym derivative of the occupation measure with respect to the Lebesgue measure. The occupation measure measures the amount of time the path of the process spends in a given set. The local times of $d$-dimensional paths have generated much interest lately. This is motivated by the fact that in order to study the sample path irregularity properties of stochastic processes, one important aspect is the smoothness of their local times. Indeed, more regularity in the local time leads to less regularity in the sample paths, and vice versa. In addition, regularity of local times can be used to obtain existence of pathwise stochastic integrals with discontinuous integrands and irregular drivers, see \cite{HTV1,HTV2}.

The study of the existence of local times dates back to the works of Berman \cite{berman,berman2}. In his approach the existence of local times follows from the integrability properties of the Fourier transform.  More precisely, to prove the existence of local times  for $d$-dimensional stochastic processes $X=(X_t)_{t\in[0,T]}$ with $T \in (0, \infty)$ one needs to verify that
$$\int_\rd \int_{[0,T]} \int_{[0,T]} \Big\lvert \mathbb{E} \Big[ \exp \big( i \langle \xi, X_t-X_s\rangle \big) \Big] \Big\rvert ds dt d\xi <\infty,$$
where $\langle \cdot, \cdot \rangle$ denotes the scalar product on $\rd$. While the Fourier approach is very suitable also for studying the regularity of the local time, see e.g. \cite{xiao-ptrf,knsv}, it requires knowledge on the characteristic function. Consequently, this makes the study of local times of processes a difficult problem beyond the Gaussian case, in which the characteristic functions are not known. Recent contributions \cite{bouf2,bouf1} provide an approach under which certain analytical estimates on the absolute value of characteristic functions of the increments are shown to be efficient in order to conclude existence and regularity properties of local times. More precisely, in \cite{bouf2} the authors prove that local times admit certain Besov regularity, provided that the absolute value of the characteristic function (of arbitrary linear combinations of the process) satisfies a certain upper bound. In this article we borrow this idea to provide regularity results for the local times which we believe to be sharp. To emphasize the contribution of our results, we show that they include various prominent examples. These include, among others, an example where the behavior of the local time was not fully understood up to now and remained as an open problem in the stochastic analysis literature. Indeed, as an illustrative example, we apply our results to find regularity of local times for the solutions of certain stochastic differential equations driven by fractional Brownian motions. In particular this completes the picture regarding the behavior of the local time of such equations and complements all the gaps in \cite{lou}. We would like to mention that the approach in \cite{lou} relies in proving certain density estimates for the law of the solution to such equations. However, these estimates are shown not to be good enough in high dimensions and for large Hurst parameters. In this paper, we completely avoid the use of densities and overcome the aforementioned problems. Consequently, we achieve to obtain desired results both in higher dimensions and for large Hurst parameters.

For further illustration purposes, we also provide a simple proof for the fact that the Rosenblatt process satisfies our general conditions. This allows to recover and (slightly) improve recent results of \cite{knsv}, where the regularity of the local time of the Rosenblatt process was studied in detail, by means of harmonic analysis and operator theory. Finally, as a simple corollary we obtain regularity of the local times for Gaussian quasi-helices satisfying the Gaussian local non-determinism condition, that in particular covers some known results for the local times of $d$-dimensional fractional Brownian motions, see \cite{xiao-ptrf}.

Our main result, Theorem \ref{main}, is a development of some results in \cite{bouf2,bouf1}. We apply a similar upper bound for the absolute value of the characteristic function. In comparison, our required upper bound is more relaxed, and hence our general conditions are easier to verify. Most notably and unlike in \cite{bouf2}, our approach does not require densities of the process to be infinitely differentiable, cf. Remark \ref{remark:comparison}. Additionally, we provide further new regularity properties for the local time.

The rest of the paper is organized as follows.  In Section \ref{sect:rlt} we state our main results, and we provide the results concerning our examples in Section \ref{examples}. All the proofs are given in Section \ref{sect:proofs}.

\section{Regularity of local times}
\label{sect:rlt}
Throughout this paper let $T\in(0, \infty)$, $(\Omega, \mathbb{F}=(\mathcal{F}_t)_{t\in[0,T]},\mathbb{P})$ the underlying filtered probability space satisfying standard assumptions and suppose that $X=(X_t)_{t\in[0,T]}$ is an adapted process with values in $\rd$, $d\in\N$. Recall that the occupation measure of $X$ is defined for sets $A\times B \subset \rd \times [0,T]$ as
$$\tau_X (A,B) = \lambda \big( B \cap X^{-1} (A) \big),$$
where $\lambda$ denotes the Lebesgue measure on $[0,T]$, so that $\tau_X$ measures the time the process $X$ spends in the set $A$. If the measure $A\mapsto \tau_X(A,B)$ is absolutely continuous with respect to the $d$-dimensional Lebesgue measure $\lambda_d$, the corresponding Radon-Nikodym derivative
$$ L(x,B) = \frac{d\tau_X}{d\lambda_d} (x,B)$$
is called the local time (or occupation density) of $X$. In other words, we have
$$\tau_X (A,B) = \int_A L(x,B) dx.$$
For simplicity we denote $L(x,t) = L(x,[0,t])$ {and note that, for a closed interval $I=[a,b]$, we have $L(x,I) = L(x,b)-L(x,a)$.}

\subsection{General setting with main results} \label{s21}
The well-known approach due to Berman \cite{berman} in proving the existence of local times relies on integrability properties of the Fourier transform. Generally one is left to show that
$$\int_\rd \int_{[0,T]} \int_{[0,T]} \Big\lvert  \mathbb{E} \Big[ \exp \big( i \langle \xi, X_t-X_s\rangle \big) \Big] \Big\rvert  ds dt d\xi <\infty,$$
where $\langle \cdot, \cdot \rangle$ denotes the scalar product on $\rd$. Beyond the Gaussian case, in which the characteristic functions are not known, this makes the study of local times of processes as considered here a challenging problem. Recent contributions \cite{bouf2,bouf1} provide an approach under which certain analytical estimates on the absolute value of characteristic functions of the increments are shown to be efficient in order to conclude existence and regularity properties of local times. This is the idea we borrow in the following in order to conclude regularity results which we believe to be sharp. As our main example, we apply these results to processes for which the regularity of local times were an open problem up to now. Throughout, we denote by $\|\cdot\|$ the Euclidean norm on $\rd$. Our main theorem is the following.
\begin{theorem} \label{main}
Suppose that $X=(X_t)_{t\in[0,T]}$ is a stochastic process with values in $\rd$ and continuous paths \textcolor{black}{such that $X_0$ is deterministic}. Assume that the following conditions are satisfied for some $\alpha \in (0,\frac{1}{d})$.
\begin{enumerate}
\item[(A1)]
\rev{For every $m\in \N$, $\xi_j = (\xi_{j,1}, \ldots, \xi_{j,d}) \in (\rr\setminus \{0\} )^d$, $k_{j,l} \in \{0,4\}$, $1\leq j\leq m$ and $1\leq l\leq d$, and $0=t_0  <t_1 < \ldots < t_m<T$,} it holds
\begin{align*}
\Big\lvert  \mathbb{E} \Big[ \exp \Big( i \sum_{j=1}^m \langle \xi_j, X_{t_{j}}-X_{t_{j-1}}\rangle \Big) \Big]\Big\rvert  \leq C^mm^{\alpha \theta m}\prod_{j=1}^m \prod_{l=1}^d\frac{1}{\lvert\xi_{j,l}\rvert^{k_{j,l}} 
(t_j-t_{j-1})^{\alpha k_{j,l}}},
\end{align*}
where $C>0$ and $\theta \geq 0$. \label{a1}
\item[(A2)] 
There exists a constant $C>0$ and $\iota \in[0,1]$ such that for any $p\geq 1$ and any $0 \leq s \leq t \leq T$ it holds
$$\mathbb{E} [ \| X_t - X_s\| ^p] \leq C^pp^{p\iota} \lvert t-s\rvert^{p\alpha}.$$ \label{a2}
\end{enumerate}
Then the local time $(x,t) \mapsto L(x,[0,t])$ exists \textcolor{black}{and satisfies, for every \textcolor{black}{$n\in \mathbb{N}$}, $0\leq s< t\leq T$, $x,y \in \mathbb{R}^d$, and every $0\leq \gamma<\min\left(1,\frac{1-\alpha}{2\alpha}\right)$, that
\begin{equation}\label{eq:mom3-theorem}
\E \lvert L(x,t)-L(x,s)\rvert^n \leq C^n (t - s)^{(1 - \alpha d) n} n^{ n \alpha (d+\theta) }
\end{equation}
and 
\begin{equation}\label{eq:mom4-theorem}
\left\lvert \E (L(x+y,[s,t])-L(x,[s,t]))^{n} \right\rvert  \leq C^n \|y\|^{\gamma n} (t - s)^{(1 - \alpha(d + \gamma) ) n} n^{ n \alpha( d + \gamma+\theta) },
\end{equation}
where the constant $C$ depends solely on $\gamma$, $d$, and $\alpha$. 
In particular, $L$ is jointly continuous almost surely.} \textcolor{black}{Let $I \subset (0,T)$ be a closed interval and set $L^*(I) = \sup_{x\in \rd} L(x,I)$. Then} there exist deterministic constants $C_1$ and $C_2$ such that, almost surely,
\begin{equation}
\label{eq:main-fixed-s}
 \limsup _{r \to 0} \frac{\textcolor{black}{ L^*([s-r,s+r])}}{r^{1-\alpha d} (\log \log r^{-1})^{\alpha(d+\theta)}} \leq C_1
 \end{equation}
for \textcolor{black}{every $s\in I$} and
\begin{equation}
 \label{eq:main-sup-s}
\limsup _{r \to 0} \sup_{s\in \textcolor{black}{I}}  \frac{ \textcolor{black}{L^*([s-r,s+r])}}{r^{1-\alpha d} (\log r^{-1})^{\alpha(d+\theta)}} \leq C_2 .
\end{equation}
\end{theorem}

\textcolor{black}{
\begin{remark}
\label{remark:away-from-zero}
In the above result one can replace the condition $0=t_0  <t_1 < \ldots < t_m<T$ in $(A1)$ with the condition 
$0< \epsilon \leq t_1 < \ldots < t_m<T$ (with the convention $t_0=0$) for some $\epsilon$ (in this case the constant $C$ may depend on $\epsilon$ as well). In this case the local time $L(x,[\epsilon,t])$ for $t\in [\epsilon,T]$ exists and is jointly continuous and  \eqref{eq:main-fixed-s} and \eqref{eq:main-sup-s} hold for $s\in I \subset (\epsilon,T)$. Note also that if this replaced condition holds for every $\epsilon>0$, then by a limiting argument one obtains the existence of the local time $L(x,[0,t])$, cf. Theorem \ref{main2} below. 
\end{remark}
}
\begin{remark}
The parameter $\theta\geq 0$ in (A1) is arbitrary, and hence allows very rapid growth in the constant (in terms of $m$). We point out that in typical cases, one can choose $\theta=0$ which will be the case in almost all of our examples. However, we allow more general growth in order to emphasize the generality of our results. The parameter $\iota$ in Assumption (A2) on the other hand is related to the sharper modulus of continuity involving a logarithmic term, and the exponent for the logarithmic term arises from $\iota$, which on the other hand is related to the existence of certain exponential moments, see Proposition \ref{prop:sup-tail} below. In the Gaussian case, it is well-known that one can choose $\iota = 1/2$, see Proposition \ref{main4}, while in the Rosenblatt case one chooses $\iota=1$, see Proposition \ref{main3}.
\end{remark}
\begin{remark}
\label{remark:comparison}
The authors in \cite{bouf2,bouf1} assume a condition similar to (A1), called $\alpha$-local nondeterminism, see \cite[Definition 2.4]{bouf1} and \cite[Definition 2.8]{bouf2}. The concept of this notion is explained as an extension of local nondeterminism in the framework of Gaussian and stable processes \cite{berman2,nolan}, connected to the (un)predictability of paths. The main difference is that in \cite{bouf2} the authors assumed that one can choose integers $k_{j,l}$ arbitrarily, while here we merely assume two possible choices $k_{j,l} \in \{0,4\}$. It is worth to point out that our condition is not as restrictive, as it allows to cover densities that are not infinitely differentiable. Indeed, by using the well-known relation between smoothness of the density and the decay of the Fourier transform, choice $k_{j,l}=4$ requires only existence of derivatives up to fourth order. In this article, we show that condition (A1) as stated here along with (A2) is enough to obtain existence of the local time together with additional (sharp) regularity properties. In order to obtain sharp regularity estimates, it is also required to keep track on the constant depending on $m$ (in our case, in terms of $\theta$). This was omitted in \cite{bouf2} where the authors did not study sharp regularity. Finally, we also point out that our condition is global in the time points $0=t_0<t_1<\ldots<t_m<T$, while the condition in \cite{bouf2} is stated on small time scales $t_m-t_0<\epsilon$ only. However, by examining our proof carefully one observes that our results remain valid under local conditions stated for small time scales only. For the sake of simplicity, we assume the global condition (A1) in order to avoid unnecessary extra technicalities in our proofs.
\end{remark}

Following \cite{knsv} we also get the following corollaries.

\begin{corollary} \label{cm1}
Under the assumptions of Theorem \ref{main} there exist deterministic constants $C_1$ and $C_2$, not depending on $x$ and $t$, such that for every \textcolor{black}{$t\in I$} and $x \in \rd$ we have, almost surely,
$$ \limsup _{r \to 0} \frac{ L(x,[t-r,t+r])}{r^{1-\alpha d} (\log \log r^{-1})^{\alpha(d+\theta)}} \leq C_1$$
and for every $x \in \rd$ we have, almost surely,
$$ \limsup _{r \to 0} \sup_{t \in \textcolor{black}{I}} \frac{ L(x,[t-r,t+r])}{r^{1-\alpha d} (\log r^{-1})^{\alpha(d+\theta)}} \leq C_2.$$
\end{corollary}

\begin{corollary} \label{cm2}
Under the assumptions of Theorem \ref{main} there exists a deterministic constant $C \in (0, \infty)$ such that for every $s\in \textcolor{black}{I}$ we have, almost surely, 
$$ \liminf _{r \to 0} \sup_{t \in (s-r,s+r)} \frac{\|X_t - X_s \| }{r^{\alpha d} (\log \log r^{-1})^{-\alpha(d+\theta)}} \geq C$$
and 
$$ \liminf _{r \to 0} \inf_{s\in \textcolor{black}{I}} \sup_{t \in (s-r,s+r)} \frac{\|X_t - X_s \| }{r^{\alpha d} ( \log r^{-1})^{-\alpha(d+\theta)}} \geq C.$$
In particular, $X$ is almost surely nowhere differentiable.
\end{corollary}

Before turning to the most interesting examples as a highlight of this paper, we would like to remark that due to the contribution \cite{bouf2} one can also conclude certain Besov regularities (see \cite[Theorem 1.1, Theorem 1.4 and Corollary 1.5]{bouf2} for details) and the fact that $L(x,t)$ has a continuous version, say $\tilde{L} (x,t)$ satisfying
$$ \lvert\tilde{L} (x,t) - \tilde{L} (x,s) \rvert \leq \eta \lvert t-s\rvert^\beta$$
for every $\beta \in (0, 1-d\alpha)$ and some random variable $\eta$.

\subsection{Examples}
\label{examples}
We begin with the leading example, which covers the case where $X$ is the solution of stochastic differential equations driven by $d$-dimensional fractional Brownian motion. In particular, we show that one can apply Theorem \ref{main}, allowing us to complement the open gaps in \cite{lou}. \textcolor{black}{In order to establish (A1) and as in \cite{lou}, we apply estimates for the Malliavin derivatives proved in \cite{baudoin}. However, these estimates force us to restrict ourselves away from zero, cf. Theorem \ref{main2} below and Remark \ref{remark:away-from-zero}.}

Let $B=(B^1, \ldots, B^d)$ be a $d$-dimensional fractional Brownian motion with Hurst parameter $H \in (\frac{1}{4}, 1)$, i.e. the components $B^l = (B^l_t)_{t\in [0,T]}$, $1 \leq l \leq d$, are independent centered Gaussian processes satisfying
$$\mathbb{E} [ (B_t^l - B_s^l) ^2 ] = \lvert t-s\rvert^{2H}, \quad \forall s,t \in [0,T].$$
Consider the following class of differential equations given by
\begin{equation}\label{SDE1}
X_t = x + \int_0^t V_0 (X_s) ds + \sum_{l=1}^d \int_0^t V_l(X_s) dB_s^l,
\end{equation}
where $t \in [0,T]$, $x \in \rd$ is the initial condition and $V_0, V_1, \ldots, V_d$ are smooth vector fields in $\rd$. Of importance is to mention that the stochastic integrals are understood in the Young sense for $H>\frac12$ (see \cite{zahle}) and in the rough path sense for $\frac{1}{4} < H < \frac12$ (see \cite{friz}). Regarding the well-posedness of these stochastic differential equations we state the following assumptions which are also standard in studying such equations (see \cite{baudoin,lou}).

Assume that $V_0, V_1, \ldots, V_d \in \mathcal{C}_b^\infty (\rd)$, i.e. they possess bounded derivatives of all orders, and suppose that $V_0, V_1, \ldots, V_d$ satisfy the uniform elliptic condition
$$ v^{\operatorname{T}} V(x) V(x)^{\operatorname{T}} v \geq \lambda \| v\| ^2 \quad \forall x,v \in \rd,$$
where $V = (V_j^i)_{i,j = 1 \ldots, d}$ and $\lambda \in (0, \infty)$ is some constant.

Having specified the class of stochastic differential equations we now state the following result, which considerably extends the regularity results in \cite[Theorem 1.1]{lou} as it includes all possible dimensions and Hurst parameters for which the local time exists.

\begin{theorem} \label{main2}
Let $X=(X_t)_{t\in[0,T]}$ be the solution to \eqref{SDE1} with the assumptions above \textcolor{black}{and suppose $dH<1$. Then the local time $L(x,[0,t])$ exists. Let $I\subset (0,T)$ be a closed interval. Then we also have \eqref{eq:mom3-theorem} and \eqref{eq:mom4-theorem} with $\alpha =H$ and \textcolor{black}{$\theta= \frac{8d+\delta}{H}$}, for any $\delta>0$, provided that $s,t\in I$. In particular, the local time $L(x,[0,t]\cap I)$ is jointly continuous almost surely. Moreover, \eqref{eq:main-fixed-s}, \eqref{eq:main-sup-s}, and the assertions of Corollary \ref{cm1} and Corollary \ref{cm2} hold. }
\end{theorem}
\textcolor{black}{
\begin{remark}
Note that in the above result, we always have sharp H\"older estimates \eqref{eq:main-fixed-s} and \eqref{eq:main-sup-s} similarly as in Theorem \ref{main}, as in these statements one already considers an interval $[s-r,s+r]$ that is bounded away from zero. In comparison to Theorem \ref{main}, the only difference is that one obtains joint continuity only on arbitrary subintervals $I$ that are automatically bounded away from zero. However, by a limiting argument we still obtain the existence of $L(x,[0,t])$ over the whole interval.
\end{remark}
}
\textcolor{black}{
\begin{remark}
\textcolor{black}{It is evident that the value of $\theta$ is not optimal.} We note however that this does not play any significant role as it affects only to the exponent in the logarithmic term in Corollary \ref{cm1} and \ref{cm2}.
\end{remark}
}
As a next example of our main results in Section \ref{s21} we would like to add the Rosenblatt process $Z=(Z_t)_{t\in[0,T]}$ which depends on an underlying Hurst index $H\in (\frac12, 1)$ and is treated in \cite{knsv}. In particular, for this process one has the representation
$$Z_t = \int_{\{ (x,y) \in \rr^2 : x \neq \pm y \} } \frac{e^{it (x+y)} -1}{i(x+y)} Z_G(dx) Z_G(dy)$$
for every $t \in [0,T]$, where $Z_G(dx)$ is a complex-valued random white noise with control measure $G$ satisfying $G(dx) = \lvert x\rvert^{-H} dx$. See \cite{knsv} for a more detailed introduction and discussion of this process. As a special case of Theorem \ref{main} we obtain the following results, also addressed in \cite{knsv}.

\begin{proposition} \label{main3}
Let $Z=(Z_t)_{t\in[0,T]}$ be the Rosenblatt process on $\mathbb{R}$ with $H\in (\frac12, 1)$. Then the assertions of Theorem \ref{main}, Corollary \ref{cm1} and Corollary \ref{cm2} are true with $\alpha =H$, $\theta=0$ and $\iota=1$.
\end{proposition}
\begin{remark}
The above proposition improves the results of \cite{knsv} slightly in terms of the exponent in the logarithmic term. That is, we observe
\begin{equation*}
 \limsup _{r \to 0} \frac{\sup_{x\in\rr} L(x,[s-r,s+r])}{r^{1-H} (\log \log r^{-1})^{H}} \leq C_1,
 \end{equation*}
while in \cite{knsv} the authors observe
\begin{equation*}
 \limsup _{r \to 0} \frac{\sup_{x\in\rr} L(x,[s-r,s+r])}{r^{1-H} (\log \log r^{-1})^{2H}} \leq C_1.
 \end{equation*}
\end{remark}
Finally we show that Theorem \Ref{main} also covers the popular class of certain Gaussian processes in which case the results are already well-known, see e.g. \cite[Corollary 1.1]{xiao-ptrf}.

\begin{proposition} \label{main4}
Let $Z=(Z_t)_{t\in[0,T]}$ be a centered $d$-dimensional Gaussian process with components $Z^l$ satisfying
\begin{equation}
\label{eq:Gaussian-variance}
C_- \lvert t-s\rvert^{2H} \leq \mathbb{E}(Z^l_t - Z^l_s)^2 \leq C_+\lvert t-s\rvert^{2H}
\end{equation}
for some $H \in (0,\frac{1}{d})$. Suppose further that the local non-determinism property
\begin{equation}
\label{eq:Gaussian-LND}
\E \left\Vert \sum_{k=1}^m \langle \xi_k,Z_{t_k}-Z_{t_{k-1}}\rangle\right\Vert^2 \geq C \prod_{l=1}^d \sum_{k=1}^m \xi_{k,l}^2 \E (Z^l_{t_k}-Z^l_{t_{k-1}})^2
\end{equation}
holds for all $m\in \mathbb{N}$, $\xi_k \in \mathbb{R}^d$ and all $0=t_0<t_1<\ldots<t_m\leq T$. Then the assertions of Theorem \ref{main}, Corollary \ref{cm1} and Corollary \ref{cm2} are true with $\alpha=H$, $\theta=0$, and $\iota=\frac12$.
\end{proposition}
\begin{remark}
The above result covers, e.g. the case of $d$-dimensional fractional Brownian motion with Hurst index $H<\frac{1}{d}$, proved already in \cite{xiao-ptrf}. The case of the fractional Brownian motion is obviously also already covered by Theorem \ref{main2}.
\end{remark}
\section{Proofs}
\label{sect:proofs}
\subsection{Proof of the main results}
We begin by proving the existence of local time.
\begin{proposition}
The local time $L$ of $X$ exists and has a representation
\begin{align}
\label{eq:local_rep} L(x, t) = \frac{1}{2 \pi} \int_{\rd} \int_0^t e^{i \langle\xi, (x - X_s)\rangle } ds d\xi.
\end{align}
\end{proposition}

\begin{proof}
The existence and representation \eqref{eq:local_rep} follows from \cite{berman} if we show 
\begin{align*}
&\int_\rd \int_{[0,T]} \int_{[0,T]} \Big\lvert  \mathbb{E} \Big[ \exp \big( i \langle \xi, X_t-X_s\rangle \big) \Big] \Big\rvert  ds dt d\xi \\
&= 2 \int_0^T \int_0^t \int_\rd \Big\lvert  \mathbb{E} \Big[ \exp \big( i \langle \xi, X_t-X_s\rangle \big) \Big] \Big\rvert  d\xi ds dt  <\infty.
\end{align*}
We set $I_1 = \left[-(t-s)^{-\alpha},(t-s)^{-\alpha}\right]$ and $I_2 = \mathbb{R} \setminus I_1$. Then
$$
\mathbb{R}^d = \bigcup_{i_{l}\in \{1,2\}, l=1,\ldots,d} \prod_{l=1}^d I_{i_{l}}.
$$
Next we apply (A1) to get 
$$
\Big\lvert  \mathbb{E} \Big[ \exp \big( i \langle \xi, X_t-X_s\rangle \big) \Big] \Big\rvert  \leq C\prod_{l=1}^d\lvert\xi_l\rvert^{-k_{l}}(t-s)^{-\alpha k_{l}},
$$
where we choose $k_{l} = k_{l}(i_{l})=0$ if $i_{l}=1$ and $k_{l}(i_{l})=4$ if $i_{l}=2$. Then 
\begin{align*}
&\int_0^T \int_0^t \int_\rd \Big\lvert  \mathbb{E} \Big[ \exp \big( i \langle \xi, X_t-X_s\rangle \big) \Big] \Big\rvert  d\xi ds dt \\
& \leq C\sum_{i_{l}\in \{1,2\}, l=1,\ldots,d} \int_0^T \int_0^t \int_{\prod_{l=1}^d I_{i_{l}}}\prod_{l=1}^d\lvert\xi_l\rvert^{-k_{l}}(t-s)^{-\alpha k_{l}} d\xi dsdt,
\end{align*}
where 
$$
\int_{I_{i_l}} \lvert\xi_l\rvert^{-k_{l}}(t-s)^{-\alpha k_{l}} d\xi_l = C(t-s)^{-\alpha}
$$
by our choices of $k_l$ for $i_l$. Hence 
\begin{align*}
&\int_0^T \int_0^t \int_\rd \Big\lvert  \mathbb{E} \Big[ \exp \big( i \langle \xi, X_t-X_s\rangle \big) \Big] \Big\rvert  d\xi ds dt \\
& \leq C\sum_{i_{l}\in \{0,2\}, l=1,\ldots,d} \int_0^T \int_0^t (t-s)^{-d\alpha} dsdt < \infty
\end{align*}
which concludes the proof.
\end{proof}
\begin{proposition}\label{prop:int_fourier} Let $n \in \mathbb{N}$ and  $0 \leq \eta < \min\left(1,\frac{1-\alpha d}{2\alpha}\right)$. Then, for any times $0 < u < U\leq T$ we have
\begin{align}
\label{eq:bound_fourier}   \int_{[u, U]^n} \int_{(\mathbb{R}^{d})^n} \prod_{j =1}^n \|\xi_j\|^{\eta} \left\lvert \mathbb{E} \exp \left( i \sum_{j = 1}^n \langle \xi_j, X_{t_j}\rangle \right) \right\rvert d\xi dt  \leq C^n n^{n\alpha(d + \eta+\theta)} (U - u)^{(1 - \alpha(d + \eta) ) n},
\end{align} 
\noindent where the constant $C > 0$ depends only on $\alpha, d$, and $\eta$. 
\end{proposition} 
\begin{proof}
\rev{The proof is similar to \cite[Lemma 3,2]{bouf2}. In the sequel, we denote by $C$ a generic unimportant constant that may vary from line to line.} Performing \textcolor{black}{the} change of variables $\xi'_j \coloneqq \sum_{\ell=j}^n\textcolor{black}{\xi_\ell}$, $j=1,\ldots ,n$ with the convention \textcolor{black}{$\xi'_{n+1}=0$}, we see that
\begin{align*}
I &:= \int_{[u, U]^n} \int_{(\mathbb{R}^{d})^n} \prod_{j =1}^n \|\xi_j\|^{\eta} \left\lvert \mathbb{E} \exp \left( i \sum_{j = 1}^n \langle\xi_j, X_{t_j}\rangle \right) \right\rvert d\xi dt  \\
&= n!   \int_{u\leq t_1<t_2<\ldots < t_n\leq U} \int_{(\mathbb{R}^{d})^n} \prod_{j =1}^n \|\xi_j\|^{\eta} \left\lvert \mathbb{E} \exp \left( i \sum_{j = 1}^n \langle\xi_j, X_{t_j}\rangle \right) \right\rvert d\xi dt  \\
&=C(d)n!   \int_{u\leq t_1<t_2<\ldots < t_n\leq U}\\
& \quad \int_{(\mathbb{R}^{d})^n} \prod_{j =1}^n \|\xi'_j-\xi'_{j+1}\|^{\eta} \left\lvert \mathbb{E} \exp\left(i\xi'_1 X_0\right)\exp \left( i \sum_{j = 1}^n \langle\xi'_j, (X_{t_j}-X_{t_{j-1}})\rangle \right) \right\rvert d\xi' dt\\
&\textcolor{black}{\leq} C(d)n!   \int_{u\leq t_1<t_2<\ldots < t_n\leq U} \int_{(\mathbb{R}^{d})^n} \prod_{j =1}^n \|\xi'_j-\xi'_{j+1}\|^{\eta} \left\lvert \mathbb{E} \exp \left( i \sum_{j = 1}^n \langle\xi'_j, (X_{t_j}-X_{t_{j-1}})\rangle \right) \right\rvert d\xi' dt.
\end{align*} 
\textcolor{black}{Here we have used the fact that the determinant of the Jacobian related to the change of variables $\xi'_j \coloneqq \sum_{\ell=j}^n\textcolor{black}{\xi_\ell}$ is a constant $C(d)$ depending only on $d$ (note that in the case $d=1$, we actually have $C(d) = 1$), and that 
$$
\sum_{j=1}^n \langle\xi_j, X_{t_j} \rangle= \langle\xi'_1 ,X_0\rangle + \sum_{j=1}^n \langle\xi_j , X_{t_j}-X_0\rangle = \langle\xi'_1, X_0\rangle + \sum_{j=1}^n \langle\xi'_j ,X_{t_j}-X_{t_{j-1}}\rangle.
$$
}
Here, \textcolor{black}{since $\eta\in(0,1)$},  $\|\xi'_j-\xi'_{j+1}\|^{\eta}\textcolor{black}{\leq} (\|\xi'_j\|^\eta+\|\xi'_{j+1}\|^\eta)$ and thus
$$
\prod_{j=1}^n \|\xi'_j-\xi'_{j+1}\|^{\eta}\textcolor{black}{\leq} \sum \prod_{j=1}^n \|\xi'_j\|^{\gamma_j},
$$ 
where the exponents in each term of the sum satisfy $\gamma_j\in \{0,\eta,2\eta\}$ and $\sum_{j=1}^n \gamma_j = \eta n$. Since $\xi'_{n+1}=0$, the number of summands equals $2^{n-1}$. This gives
\begin{align*}
I &\leq n!C^n   \int_{u\leq t_1<t_2<\ldots < t_n\leq U} \int_{\mathbb{R}^n} \sum\prod_{j =1}^n \|\xi'_j\|^{\gamma_j} \left\lvert \mathbb{E} \exp \left( i \sum_{j = 1}^n \langle\xi'_j, (X_{t_j}-X_{t_{j-1}})\rangle \right) \right\rvert d\xi' dt.
\end{align*}
By (A1), we have 
$$
\left\lvert \mathbb{E} \exp \left( i \sum_{j = 1}^n \langle\xi'_j, (X_{t_j}-X_{t_{j-1}})\rangle \right) \right\rvert \leq C^n n^{\alpha \theta n}\prod_{m=1}^n\prod_{l=1}^d \lvert\xi'_{m,l}\rvert^{-k_{m,l}}(t_m-t_{m-1})^{-\alpha k_{m,l}},
$$
where each $k_{m,l} \in \{0,4\}$. 
Again, define $I_1^m = \left[-(t_m-t_{m-1})^{-\alpha},(t_m-t_{m-1})^{-\alpha}\right]$ and $I_2^m = \mathbb{R} \setminus I_1^m$. Then
$$
(\mathbb{R}^d)^n = \bigcup_{i_{m,l}\in \{1,2\}, m=1,\ldots,n, l=1,\ldots,d} \prod_{m=1}^n\prod_{l=1}^d I^m_{i_{m,l}}.
$$
Now we choose $k_{m,l} = k_{m,l}(i_{m,l})=0$ if $i_{m,l}=1$ and $k_{m,l}(i_{m,l})=4$ if $i_{m,l}=2$. Using also 
$$
\prod_{j =1}^n \| \xi'_j \|^{\gamma_j} \leq \sum_{l_1,\ldots,l_d\in\{1,\ldots,d\}}\prod_{j=1}^n \lvert\xi'_{j,l_j}\rvert^{\gamma_j},
$$ 
it follows that 
\begin{align*}
I &\leq n!C^nn^{\alpha \theta n}  \sum_{l_1,\ldots,l_d\in\{1,\ldots,d\}}\sum_{i_{m,l}\in\{1,2\},m=1,\ldots,n,l=1,\ldots,d} \sum_{(\gamma_1,\ldots,\gamma_n)\in\{0,\eta,2\eta\}^n}  \\
& \int_{u\leq t_1<t_2<\ldots < t_n\leq U}\int_{\prod_{m=1}^n\prod_{l=1}^d I^m_{i_{m,l}}} \prod_{j =1}^n \lvert\xi'_{j,l_j}\rvert^{\gamma_j} \prod_{m=1}^n\prod_{l=1}^d \lvert\xi'_{m,l}\rvert ^{-k_{m,l}}(t_m-t_{m-1})^{-\alpha k_{m,l}} d\xi' dt.
\end{align*}
Following \textcolor{black}{\cite[pages 16-18]{bouf2}}, integrating in the $\xi'$ variables gives 
\begin{align*}
I &\leq n!C^nn^{\alpha \theta n}  \sum_{l_1,\ldots,l_d\in\{1,\ldots,d\}}\sum_{i_{m,l}\in\{1,2\},m=1,\ldots,n,l=1,\ldots,d} \sum_{(\gamma_1,\ldots,\gamma_n)\in\{0,\eta,2\eta\}^n} \\
& \int_{u\leq t_1<t_2<\ldots < t_n\leq U} \prod_{j=1}^n (t_j-t_{j-1})^{-\alpha (d+\gamma_j)} dt \\
&\leq n!C^n n^2 n^{\alpha \theta n}\sum_{(\gamma_1,\ldots,\gamma_n)\in\{0,\eta,2\eta\}^n}
\int_{u\leq t_1<t_2<\ldots < t_n\leq U} \prod_{j=1}^n (t_j-t_{j-1})^{-\alpha (d+\gamma_j)} dt,
\end{align*}
where the last inequality follows from the fact that 
$$
\sum_{l_1,\ldots,l_d\in\{1,\ldots,d\}} 1 = C(d)
$$
is just a constant depending on the fixed dimension $d$ only, while 
$$
\sum_{i_{m,l}\in\{1,2\},m=1,\ldots,n,l=1,\ldots,d} \leq C(d) \binom{n}{2} \leq C(d) n^2.
$$
Note that, for any $\theta_1,\theta_2>-1$, we have
\begin{equation}
\label{eq:beta-function}
\int_0^{t_{j+1}} (t_{j+1}-t_j)^{\theta_1}t_j^{\theta_2}dt_j = t_{j+1}^{1+\theta_1+\theta_2}B(1+\theta_1,1+\theta_2),
\end{equation}
where $B(x,y) = \int_0^1 t^{x-1}(1-t)^{y-1}dt$ denotes the Beta function. Hence, denoting $\theta_j = -\alpha(d+\gamma_j)$ and integrating in the order $t_1,t_2,\ldots, t_m$ and using 
\eqref{eq:beta-function} repeatedly, yields
\begin{align*}
&\int_{u\leq t_1<t_2<\ldots < t_n\leq U} \prod_{j=1}^n (t_j-t_{j-1})^{-\alpha (d+\gamma_j)} dt\\
&=\int_{0\leq t_1<t_2<\ldots < t_n\leq U-u}\prod_{j=1}^n (t_j-t_{j-1})^{\theta_j} dt\\
&=B(1+\theta_1,1+\theta_2)\int_{0\leq t_2<\ldots < t_n\leq U-u}\prod_{j=3}^n (t_j-t_{j-1})^{\theta_j}
t_2^{1+\theta_1+\theta_2} dt_2\ldots dt_n\\
&=B(1+\theta_1,1+\theta_2)B(2+\theta_1+\theta_2,1+\theta_3)\\
&\cdot\int_{0\leq t_3<\ldots < t_n\leq U-u}\prod_{j=4}^n (t_j-t_{j-1})^{\theta_j}t_3^{2+\theta_1+\theta_2+\theta_3} dt_3\ldots dt_n\\
&=\ldots\\
&=\prod_{j=2}^n B(j-1+\sum_{k=1}^{j-1}\theta_k,1+\theta_{j})\int_{0}^{U-u}t_n^{n-1+\sum_{k=1}^n \theta_j}dt_n \\
&=\prod_{j=2}^n B(j-1+\sum_{k=1}^{j-1}\theta_k,1+\theta_{j}) \frac{(U-u)^{n+\sum_{k=1}^n \theta_j}}{n+\sum_{k=1}^n \theta_j}
\end{align*}
since $\theta_j = -\alpha(d+\gamma_j) > -1$ as $\gamma_j\leq 2\eta < \frac{1-\alpha d}{\alpha} = \frac{1}{\alpha} -d$.
Recalling that
$$
B(x,y) = \frac{\Gamma(x)\Gamma(y)}{\Gamma(x+y)},
$$
where $\Gamma(x)$ is the Gamma function, we observe
$$
\prod_{j=2}^n B(j-1+\sum_{k=1}^{j-1}\theta_k,1+\theta_{j}) = \frac{\prod_{j=1}^n \Gamma(1+\theta_j)}{\Gamma(n+\sum_{k=1}^n \theta_j)}.
$$
Plugging in $\theta_j = -\alpha(d+\gamma_j)$, using the fact that $\sum_{j=1}^n \theta_j = -\alpha d n - \alpha \sum_{j=1}^n \gamma_j$, where $\sum_{j=1}^n \gamma_j = \eta n$, $n^2\leq C^n$, and collecting all the estimates gives 
\begin{align*}
I &\leq n!C^n n^2 n^{\alpha \theta n}\sum_{(\gamma_1,\ldots,\gamma_n)\in\{0,\eta,2\eta\}^n} \frac{\prod_{j=1}^n \Gamma(1+\theta_j)}{\Gamma((1-\alpha d -\alpha \eta)n)}\frac{(U-u)^{(1-\alpha d - \alpha \eta)n}}{(1-\alpha d -\alpha\eta)n}\\
&\leq n!C^nn^{\alpha \theta n} \frac{\prod_{j=1}^n \Gamma(1+\theta_j)}{\Gamma((1-\alpha d -\alpha \eta)n)}\frac{(U-u)^{(1-\alpha d - \alpha \eta)n}}{(1-\alpha d -\alpha\eta)n}\\
&\leq C^nn^{\alpha \theta n}  \frac{\Gamma(n+1)}{\Gamma((1-\alpha d -\alpha \eta)n)}(U-u)^{(1-\alpha d - \alpha \eta)n},
\end{align*}
where in the last inequality we have also used the definition of $\theta_j$ and $n! = \Gamma(n+1)$. 
Using Stirling's approximation gives 
$$
\frac{\Gamma(n+1)}{\Gamma((1-\alpha d -\alpha \eta)n)} \leq C^n n^{\alpha(d+\eta)n} ,
$$
which yields
$$
I \leq C^n n^{\alpha(d+\eta+\theta)n} (U-u)^{(1-\alpha d - \alpha \eta)n}
$$
and completes the proof.
\end{proof}

\textcolor{black}{The following proposition establishes the claimed bounds \eqref{eq:mom3-theorem} and \eqref{eq:mom4-theorem}.}
\begin{proposition}
\label{prop:moment-bounds}
For every $0 \leq s < t$ and $x \in \rd$, 
\begin{equation}\label{eq:mom3}
\E \lvert L(x,t)-L(x,s)\rvert^n \leq C^n (t - s)^{(1 - \alpha d) n} n^{ n \alpha (d+\theta) }.
\end{equation}
Moreover, for every $0\leq \gamma<\min\left(1,\frac{1-\alpha}{2\alpha}\right)$ and $y \in \rd$, we have
\begin{equation}\label{eq:mom4}
\left\lvert \E (L(x+y,[s,t])-L(x,[s,t]))^{n} \right\rvert  \leq C^n \|y\|^{\gamma n} (t - s)^{(1 - \alpha(d + \gamma) ) n} n^{ n \alpha( d + \gamma+\theta) }. 
\end{equation}
In both inequalities the constant $C$ depends only on $\gamma$, $d$, and $\alpha$.
\end{proposition}
\begin{proof}
First, by~\eqref{eq:local_rep}, 
\begin{align}
\notag & \E \big(L (x+y,[s,t])-L(x,[s,t])\big)^{n} \\
\notag  = & (2 \pi)^{-n} \int_{(\mathbb{R}^d)^n} \int_{[s, t]^n} \left( \prod_{j =1}^n ( \exp ( i \langle\xi_j, x+ y\rangle ) - \exp( i \langle\xi_j, x\rangle) ) \right)\E \exp\left( - i \sum_{j = 1}^n \langle\xi_j, X_{v_j}\rangle \right) dv d\xi.
\end{align}
Using $\gamma \in [0,1 )$ allows to estimate
\begin{align}
\notag & \prod_{j =1}^n \lvert \exp ( i \langle\xi_j, y\rangle ) - 1 \rvert  \leq 2^n \prod_{j =1}^n (\|y\| \|\xi_j\| \wedge 1) \leq 2^n \prod_{j =1}^n (( \|y\| \|\xi_j\| )^{\gamma} \wedge 1) \leq 2^n \|y\|^{\gamma n} \prod_{j =1}^n \|\xi_j\|^\gamma,
\end{align}
where we have used the fact that $\lvert e^{ix} - 1\rvert \leq \lvert x\rvert \wedge 2 \leq 2 ( \lvert x\rvert \wedge 1)$, for all $x$. 
 Therefore, 
\begin{align}
\notag & \left\lvert  \E \big(L (x+y,[s,t])-L(x,[s,t])\big)^{n} \right\rvert  \\ 
\notag \leq  & \pi^{-n} \|y\|^{\gamma n} \int_{(\R^d)^n} \int_{[s, t]^n} \prod_{j =1}^n \|\xi_j\|^\gamma  \left\lvert \E \exp\left( - i \sum_{j = 1}^n \langle\xi_j, X_{v_j}\rangle \right)\right\rvert  dv d\xi.  
\end{align} 
Now, Proposition~\ref{prop:int_fourier} with $\eta = \gamma$ yields: 
\begin{align}
\notag & \left\lvert \E \big(L (x+y,[s,t])-L(x,[s,t])\big)^{n} \right\rvert  \leq C^n \|y\|^{\gamma n} (t - s)^{(1 - \alpha(d + \gamma) )n} n^{ n \alpha( d + \gamma+\theta) },
\end{align} 
where $C > 0$ is a function of $\alpha$, $d$, and $\gamma$. This establishes \eqref{eq:mom4}.

Similarly, by~\eqref{eq:local_rep}, using $L(x, s) \leq L(x, t)$ for $0 \leq  s < t$, we get
\begin{eqnarray*}
 &  & \E \lvert L(x,t)-L(x,s)\rvert^n \\
& = & \left((2\pi)^{-n}\int_{[s,t]^n}\int_{\R^n}\exp\left(i\langle x,\sum_{j=1}^n \xi_j\rangle\right)\E\exp\left(-i\sum_{j=1}^n \langle\xi_j, X_{u_j}\rangle\right)d\xi du\right) \\
&\leq& (2\pi)^{-n}\int_{[s,t]^n}\int_{\R^n}\left\vert\E\exp\left(-i\sum_{j=1}^n \langle\xi_j, X_{u_j}\rangle\right)\right\vert d\xi du\\
&\leq& C^n (t - s)^{ (1 - \alpha d) n} n^{ n \alpha (d+\theta) },
\end{eqnarray*}
where the last inequality follows from Proposition~\ref{prop:int_fourier} with $\eta = 0$, and  $C > 0$ is a function of $\alpha$, $d$, and $\gamma$.
\end{proof}
The joint H\"older continuity follows immediately from the Kolmogorov criterion (see e.g.~\cite[Theorem 3.23]{ Kallenberg-2002}).
 \begin{corollary}\label{cor:holder}
 Almost surely, the local time $L(x,t)$ is jointly H\"older continuous in $t$ and $x$.
 \end{corollary}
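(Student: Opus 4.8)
The plan is to obtain the statement from the Kolmogorov--\v{C}entsov continuity criterion \cite[Theorem 3.23]{Kallenberg-2002}, using the two moment bounds \eqref{eq:mom3} and \eqref{eq:mom4} of Proposition~\ref{prop:moment-bounds} to control the increments of the random field $(x,t)\mapsto L(x,t)$ on $\rd\times[0,T]$ in the space and time directions simultaneously.

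First I would fix a parameter $\gamma\in\bigl(0,\min(1,\tfrac{1-\alpha d}{2\alpha})\bigr)$, which is a nonempty range since $\alpha\in(0,\tfrac1d)$ forces $1-\alpha d>0$, and set $\rho:=\min(\gamma,\,1-\alpha d)>0$. For points $(x,t),(x',t')\in\rd\times[0,T]$ with $\|x-x'\|\le\tfrac12$ and $|t-t'|\le\tfrac12$, assuming without loss of generality $t\le t'$, I split
\[
L(x',t')-L(x,t)=\bigl(L(x',[0,t'])-L(x,[0,t'])\bigr)+\bigl(L(x,t')-L(x,t)\bigr),
\]
so the first term is a spatial increment of $L(\cdot,[0,t'])$, to which \eqref{eq:mom4} applies (with the interval $[0,t']$; note $1-\alpha(d+\gamma)>0$ by the choice of $\gamma$, so the factor $(t')^{1-\alpha(d+\gamma)}$ is bounded by a constant as $t'\le T$), and the second is a temporal increment of $L(x,\cdot)$, to which \eqref{eq:mom3} applies. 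By Minkowski's inequality in $L^n(\Omega)$ and these two bounds, for every fixed $n\in\N$,
\[
\bigl(\E\,\bigl|L(x',t')-L(x,t)\bigr|^{n}\bigr)^{1/n}\;\le\;C\,n^{\alpha(d+\gamma+\theta)}\Bigl(\|x-x'\|^{\gamma}+|t-t'|^{1-\alpha d}\Bigr)\;\le\;C'\,n^{\alpha(d+\gamma+\theta)}\,\bigl\|(x',t')-(x,t)\bigr\|^{\rho},
\]
where the last step uses $\delta^{a}\le\delta^{\rho}$ for $\delta\in[0,1]$ and $a\ge\rho$, together with $\|x-x'\|,|t-t'|\le\|(x',t')-(x,t)\|\le1$. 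Raising to the $n$-th power gives $\E|L(x',t')-L(x,t)|^{n}\le C_n\,\|(x',t')-(x,t)\|^{\rho n}$ with $C_n$ independent of the two points.

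Now I would choose $n$ with $\rho n>d+1$ and apply the Kolmogorov criterion on each cube $Q_N:=[-N,N]^{d}\times[0,T]$: it yields a modification of $L$ that is jointly Hölder continuous on $Q_N$ of every order strictly less than $\rho-\tfrac{d+1}{n}$. This modification coincides with $L$ on a countable dense set almost surely, and $L$ is jointly continuous by Theorem~\ref{main}, so the two are indistinguishable; hence $L$ itself is a.s.\ jointly Hölder continuous on every $Q_N$, and therefore on all of $\rd\times[0,T]$ --- for each fixed $\omega$ the occupation measure $\tau_X(\,\cdot\,,[0,T])(\omega)$ is supported in the compact set $X([0,T])(\omega)$, so $L(\cdot,[0,t])(\omega)$ vanishes outside a fixed cube and local Hölder continuity is the same as global Hölder continuity here. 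Finally, letting $n\to\infty$ shows that $L$ is in fact jointly Hölder of any order strictly below $\rho=\min(\gamma,1-\alpha d)$, where $\gamma$ may be taken arbitrarily close to $\min(1,\tfrac{1-\alpha d}{2\alpha})$.

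The only step that needs genuine care is assembling the two separate (one-variable-in-index) moment estimates of Proposition~\ref{prop:moment-bounds} into a single joint estimate in the Euclidean metric on $\rd\times[0,T]$ with a common positive exponent; once that is done, the conclusion is a direct application of the Kolmogorov--\v{C}entsov theorem, and the unboundedness of the spatial variable causes no difficulty because the occupation density has compact support in $x$ for each fixed time window.
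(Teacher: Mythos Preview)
Your approach is exactly the paper's: the paper's entire proof is the one line ``follows immediately from the Kolmogorov criterion (see e.g.~\cite[Theorem 3.23]{Kallenberg-2002})'', using the moment bounds of Proposition~\ref{prop:moment-bounds}, and you have simply spelled out the standard details of that application.

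One small remark: your appeal to Theorem~\ref{main} for the joint continuity of $L$ (in order to pass from the modification back to $L$ itself) is circular, since in the paper Corollary~\ref{cor:holder} is established \emph{before} the proof of Theorem~\ref{main} and is in fact an ingredient in it. This is harmless, however: the Kolmogorov criterion already produces a H\"older continuous modification, and one simply identifies $L$ with that modification --- this is implicit in the paper's one-line argument as well.
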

The next theorem is a modification of Proposition \ref{prop:moment-bounds}, where one shifts the process in the $x$-direction by the value $X_a$, where $a$ is a fixed point. The proof differs from \cite{knsv} as in our general setting, we do not have stationarity of the increments at our disposal.
\begin{theorem}
\label{thm:moment}
Let $a > 0$ and $x\in \rd$ be fixed. Then,
\begin{equation}\label{eq:mom3-shift}
\E \lvert L(x+X_a,t)-L(x+X_a,s)\rvert ^n \leq C^n (t - s)^{(1 - \alpha d) n} n^{ n \alpha (d+\theta) }.
\end{equation}
Moreover, for every $0\leq \gamma<\min\left(1,\frac{1-\alpha d}{2\alpha}\right)$ and $y\in \rd$,
\begin{equation}\label{eq:mom4-shift}
\left\lvert  \E (L(x+y+X_a,[s,t])-L(x+X_a,[s,t]))^{n} \right\rvert  \leq C^n \|y\|^{\gamma n} (t - s)^{(1 - \alpha(d + \gamma) )n} n^{ n \alpha( d + \gamma+\theta) }.
\end{equation}
In both cases the constant $C > 0$ depends only on $\gamma$, $d$, and $\alpha$.
\end{theorem}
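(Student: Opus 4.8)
The plan is to follow the proof of Proposition \ref{prop:moment-bounds}. Since $t\mapsto L(x+X_a,t)$ is nondecreasing, we have $L(x+X_a,t)-L(x+X_a,s)=L(x+X_a,[s,t])\ge 0$, so the quantities in \eqref{eq:mom3-shift} and \eqref{eq:mom4-shift} are honest $n$-th moments. Inserting the Fourier representation \eqref{eq:local_rep}, expanding the $n$-th power, moving the modulus inside, and — for \eqref{eq:mom4-shift} — bounding $\prod_{j=1}^n|e^{i\langle\xi_j,y\rangle}-1|\le 2^n\|y\|^{\gamma n}\prod_{j=1}^n\|\xi_j\|^{\gamma}$ exactly as there, I would reduce everything to showing that
\[
\int_{[s,t]^n}\int_{(\R^d)^n}\prod_{j=1}^n\|\xi_j\|^{\eta}\,\Big|\,\E\exp\Big(i\sum_{j=1}^n\langle\xi_j,\,X_a-X_{v_j}\rangle\Big)\Big|\,d\xi\,dv
\]
is at most $C^n n^{n\alpha(d+\eta+\theta)}(t-s)^{(1-\alpha(d+\eta))n}$, with $\eta=0$ yielding \eqref{eq:mom3-shift} and $\eta=\gamma$ yielding \eqref{eq:mom4-shift}.

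The point I would then make is that, since $X_a-X_{v_j}=-(X_{v_j}-X_a)$, the displayed integral is exactly the quantity estimated in Proposition \ref{prop:int_fourier}, but for the \emph{increment process} $Y:=X-X_a$ in place of $X$, over the interval $[s,t]$. As $Y$ has the same increments as $X$, it automatically satisfies (A2) — in fact only (A1) enters Proposition \ref{prop:int_fourier} — and the whole matter reduces to verifying that $Y$ inherits (A1). This is precisely where the argument has to differ from \cite{knsv}: without stationary increments one cannot invoke $X-X_a\overset{d}{=}X$, and I expect this verification to be the crux.

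To carry it out I would first observe that condition (A1), although stated for partitions anchored at $t_0=0$, extends to any partition $0<t_0<t_1<\dots<t_m$ at essentially no cost: one prepends the point $0$ and applies (A1) to $0<t_0<\dots<t_m$ with a \emph{vanishing} coefficient on the first increment $X_{t_0}-X_0$; the bound for a zero coefficient (with the corresponding product factor read as $1$) follows from the non-vanishing case by letting that coefficient tend to $0$ and using dominated convergence, at the sole price of replacing $m$ by $m+1$, i.e.\ of a larger constant. With this in hand, if $a\le s$ the forward shift $\widetilde Y_u:=X_{u+a}-X_a$ on $[0,T-a]$ has increments $X_{u_j+a}-X_{u_{j-1}+a}$ over the $X$-times $a=a+u_0<a+u_1<\dots$ and hence satisfies (A1); if $a\ge t$ the same holds for the time-reversed process $Z_u:=X_a-X_{a-u}$ on $[0,a]$, whose increments are $X_{a-u_{j-1}}-X_{a-u_j}$. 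In either case $L(x+X_a,[s,t])$ equals, up to reflecting the spatial variable, the increment of the local time of $\widetilde Y$ (respectively $Z$) over an interval of length $t-s$, so Proposition \ref{prop:moment-bounds} (equivalently Proposition \ref{prop:int_fourier}) applied to that process delivers exactly \eqref{eq:mom3-shift} and \eqref{eq:mom4-shift}. For the remaining case $a\in(s,t)$ I would split $L(x+X_a,[s,t])=L(x+X_a,[s,a])+L(x+X_a,[a,t])$, use $(u+w)^n\le 2^{n-1}(u^n+w^n)$, and invoke the two previous cases on $[s,a]$ and $[a,t]$; the resulting $2^n$ and the constant growth coming from $m\mapsto m+1$ are absorbed into $C^n$. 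In short, the only genuine difficulty is showing that shifting $X$ in space by the random variable $X_a$ does not spoil (A1), which works because (A1) constrains only the increments of $X$.
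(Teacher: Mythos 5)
Your proposal is correct and rests on the same core reduction as the paper: pass to $Y=X-X_a$, identify $L_Y(x,\cdot)=L(x+X_a,\cdot)$, write the $n$-th moment as a Fourier integral, and exploit that condition (A1) constrains only increments of $X$. Where you go beyond the paper is in the bookkeeping that the paper elides: its proof simply remarks that $Y_{u_j}-Y_{u_{j-1}}=X_{u_j}-X_{u_{j-1}}$ and claims the argument of Proposition \ref{prop:moment-bounds} goes through, leaving implicit that after Abel summation the first increment is $X_{u_1}-X_a$, so the relevant $X$-partition is $\{a,u_1,\dots,u_n\}$ — anchored at $a$ rather than at $0$, and not even increasing when $a>u_1$. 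Your three extra devices (extending (A1) to partitions anchored at $t_0>0$ by prepending $0$ with a vanishing coefficient, a forward time shift for $a\le s$, time reversal for $a\ge t$, and splitting at $a$ for $a\in(s,t)$) supply exactly what is missing, at the harmless cost of $m\mapsto m+1$ and a factor $2^n$. The one point to tighten is the splitting step applied to \eqref{eq:mom4-shift}: the two spatial differences $U$ and $V$ over $[s,a]$ and $[a,t]$ are not of fixed sign, and the estimate $\lvert\E(U+V)^n\rvert\le 2^{n-1}(\E\lvert U\rvert^n+\E\lvert V\rvert^n)$ needs absolute $n$-th moments, whereas the two boundary cases only deliver $\lvert\E U^n\rvert$, which differs from $\E\lvert U\rvert^n$ for odd $n$. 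This is repaired in one line via Cauchy--Schwarz, $\E\lvert U\rvert^n\le(\E U^{2n})^{1/2}$, which costs only a factor $2^{n\alpha(d+\gamma+\theta)}$ absorbed into $C^n$; with that addendum the argument is complete.
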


\begin{proof}
Let $Y_t = X_t - X_a$. The occupation measure of $Y$ is just the occupation measure of $X_t$ translated by the (random) constant $X_a$. Since the occupation measure of $X_t$ has a continuous density, the occupation measure of $Y_t$ has also a continuous density given by
$L_Y(t,x) = L_X(t,x+X_a)$. Thus, in order to prove the claim, it suffices to show the estimates for $L_Y(t,x)$. For the first claim, we then proceed as before, noting that, again, $L_Y(x, s) \leq L_Y(x, t)$,
\begin{eqnarray*}
&&\E \lvert L_Y(x,t)-L_Y(x,s)\rvert ^n \\&= &(2\pi)^{-n}\int_{[s,t]^n}\int_{(\R^d)^n}\exp\left(i\langle x,\sum_{j=1}^n \xi_j\rangle\right)\E\exp\left(i\sum_{j=1}^n \langle\xi_j, Y_{u_j}\rangle\right)d\xi du\\
&\leq& (2\pi)^{-n} n!\int_{s<u_1<\ldots<u_n<t}\int_{(\R^d)^n}\left\vert\E\exp\left(i\sum_{j=1}^n \langle y_j, Y_{u_j}-Y_{u_{j-1}}\rangle\right)\right\vert dydu.
\end{eqnarray*}
The claim follows from this with the same arguments as the proof of Proposition \ref{prop:moment-bounds} by observing $Y_{u_j}-Y_{u_{j-1}} = X_{u_j}-X_{u_{j-1}}$. The other claim can be proved similarly, and we omit the details.
\end{proof}
The moment bounds obtained above translate  into the following tail estimates by Chebychev's inequality. The proof is rather standard, and we omit the details.
\begin{corollary}\label{cor:tail1} {\rm (i)}\quad  For any finite closed interval $I\subset (0, \infty)$, 
\begin{equation}
\label{eq:tail-L0}
\mathbb{P}(L(x,I)\geq \lvert I\rvert ^{1-\alpha d}u^{\alpha(d+\theta)}) \leq C_1\exp(-c_1u)
\end{equation}
and
\begin{equation}
\label{eq:tail-inc0}
\mathbb{P}(\lvert L(x,I)-L(y,I)\rvert \geq  \lvert I\rvert ^{1-\alpha(d+\gamma)}\| x-y\|^{\gamma}u^{\alpha(d+\gamma+\theta)}) \leq C_2\exp(-c_2u).
\end{equation}

\medskip

\noindent {\rm (ii)}\quad Set $I = [a,a+r]$. Then,
\begin{equation}
\label{eq:tail-L}
\mathbb{P}(L(x+X_a,I)\geq r^{1-\alpha d}u^{\alpha(d+\theta)}) \leq C_1\exp(-c_1u)
\end{equation}
and
\begin{equation}
\label{eq:tail-inc}
\mathbb{P}(\lvert L(x+X_a,I)-L(y+X_a,I)\rvert \geq  r^{1-\alpha(d+\gamma)}\lvert x-y\rvert ^{\gamma}u^{\alpha(d+\gamma+\theta)}) \leq C_2\exp(-c_2u).
\end{equation}
\textcolor{black}{Here all the constants $c_1,c_2,C_1$, and $C_2$ depend only on $\theta$, $\gamma$, $d$, and $\alpha$.}
\end{corollary}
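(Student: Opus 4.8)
The plan is to derive all four tail bounds from the moment estimates of Proposition~\ref{prop:moment-bounds} (for part (i)) and Theorem~\ref{thm:moment} (for part (ii)), combining Markov's inequality with an optimization over the moment order $n$.

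First I would establish \eqref{eq:tail-L0}. For $I=[s,t]$ the monotonicity of the occupation measure gives $L(x,s)\le L(x,t)$, so $L(x,I)=L(x,t)-L(x,s)\ge 0$ and \eqref{eq:mom3} reads $\E[L(x,I)^n]\le C^n|I|^{(1-\alpha d)n}n^{\alpha(d+\theta)n}$ for every $n\in\N$. Markov's inequality then yields
$$\mathbb{P}\bigl(L(x,I)\ge|I|^{1-\alpha d}u^{\alpha(d+\theta)}\bigr)\le\frac{\E[L(x,I)^n]}{\bigl(|I|^{1-\alpha d}u^{\alpha(d+\theta)}\bigr)^n}\le C^n\Bigl(\frac{n}{u}\Bigr)^{\alpha(d+\theta)n}.$$
Writing $\beta=\alpha(d+\theta)$ and choosing $n=\lfloor cu\rfloor$ for a small fixed $c\in(0,C^{-1/\beta})$ (admissible once $u\ge 1/c$), the right-hand side is at most $(Cc^\beta)^{n}$, which decays like $e^{-c_1u}$ because $Cc^\beta<1$; for the remaining bounded range $u<1/c$ the probability is trivially at most $1$, and enlarging the prefactor absorbs this. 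This gives \eqref{eq:tail-L0}.

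Next I would prove the increment bound \eqref{eq:tail-inc0} in the same way, the one caveat being that only even $n$ are usable: for even $n$ one has $\E|L(x,I)-L(y,I)|^n=\E(L(x,I)-L(y,I))^n$, so \eqref{eq:mom4}, applied with $x$ in place of $x+y$ and $x-y$ in place of $y$, bounds this $n$-th absolute moment by $C^n\|x-y\|^{\gamma n}|I|^{(1-\alpha(d+\gamma))n}n^{\alpha(d+\gamma+\theta)n}$. Markov's inequality with the threshold $|I|^{1-\alpha(d+\gamma)}\|x-y\|^\gamma u^{\alpha(d+\gamma+\theta)}$ gives $\mathbb{P}(\cdots)\le C^n(n/u)^{\alpha(d+\gamma+\theta)n}$, and optimizing over even $n$ proportional to $u$ yields \eqref{eq:tail-inc0}; restricting to even $n$ only changes the constants.

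Finally, part (ii) follows verbatim from part (i) after replacing Proposition~\ref{prop:moment-bounds} by Theorem~\ref{thm:moment}: the estimates \eqref{eq:mom3-shift} and \eqref{eq:mom4-shift} have exactly the structure of \eqref{eq:mom3} and \eqref{eq:mom4} with $|I|=r$, and $L(x+X_a,I)=L(x+X_a,a+r)-L(x+X_a,a)\ge 0$ by the same monotonicity, so the Markov-and-optimize argument applies unchanged to $L(x+X_a,I)$ and to its spatial increments. The only step needing any care — and the nearest thing to an obstacle — is the optimization: picking the integer (or even integer) $n$ proportional to $u$ that turns $C^n(n/u)^{\beta n}$ into exponential decay, and disposing of small $u$ via the trivial bound; everything else is direct substitution, which is why the authors call the proof standard.
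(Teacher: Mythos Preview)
Your proposal is correct and is exactly the argument the paper has in mind: the authors state only that ``the moment bounds obtained above translate into the following tail estimates by Chebychev's inequality'' and omit the details, so your Markov-inequality-plus-optimization over $n\sim u$ from Proposition~\ref{prop:moment-bounds} and Theorem~\ref{thm:moment} is precisely the intended standard proof.
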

We also recall the following Garsia-Rodemich-Rumsey inequality from \cite{Garsia-Rodemich-Rumsey-1970}.
\begin{proposition} Let $\Psi(u)$ be a non-negative even function on $(- \infty, \infty)$ and $p(u)$ be a non-negative even function on $[-1,1]$. Assume both $p(u)$ and $\Psi(u)$ are non-decreasing for $u \geq 0$. Let $f(x)$ be continuous on $[0, 1]$ and suppose that
\begin{align}
\notag \int_0^1 \int_0^1 \Psi\left( \frac{f(x) - f(y) }{p(x - y)} \right)dx dy \leq B < \infty.
\end{align}
Then, for all $s, t \in [0, 1]$, 
\begin{align}
\notag \lvert f(s) - f(t) \rvert  \leq 8 \int_0^{\lvert s - t\rvert } \Psi^{-1} \left( \frac{4B}{u^2} \right) dp(u).
\end{align}
\end{proposition}
\begin{proposition}
\label{prop:sup-tail}
Suppose that $X$ satisfies Assumption (A2). Then there exists a positive random constant $B$ with finite first moment such that
$$
\sup_{\lvert t-s\rvert <h}\|X_t-X_s\|\leq Bh^{\alpha}(\log 1/h)^{\iota}.
$$
\end{proposition}

\begin{proof}
For simplicity we consider only the case $d=1$. The general case follows by applying the one-dimensional case component-wise.
We apply the Garsia-Rodemich-Rumsey inequality with $p(u) = u^{\alpha}$ and $\Psi(x) = \exp\left(\beta \lvert x\rvert ^{\frac{1}{\iota}}\right)$ for suitably chosen small $\beta>0$ to be determined later. It follows that 
\begin{align}
\label{eq:GRS-fundamental}
\lvert X_t - X_s \rvert  \leq C \int_0^{\lvert s - t\rvert } \beta^{-\iota}\left(\log\left(\frac{4B}{u^2}\right)\right)^{\iota}u^{\alpha-1}du,
\end{align}
where 
$$
B = \int_0^1\int_0^1 \exp\left(\beta\left(\frac{\lvert X_t-X_s\rvert }{\lvert t-s\rvert ^{\alpha}}\right)^{\frac{1}{\iota}}\right)dsdt.
$$
Assuming that $B$ is a finite random variable, it follows from \eqref{eq:GRS-fundamental} that 
\begin{align*}
\lvert X_t - X_s \rvert  &\leq C \int_0^{\lvert s - t\rvert } \beta^{-\iota}\left(\log\left(\frac{4B}{u^2}\right)\right)^{\iota}u^{\alpha-1}du \\
& \leq C(\beta,\iota) \left[\log(\max(1,4B))+1\right]^\iota \int_0^{\lvert t-s\rvert } \left(\log (1/u)\right)^{\iota}u^{\alpha-1}du \\
&\leq C(\beta,\iota)\left[\max(1,B)+1\right]^\iota \lvert t-s\rvert ^{\alpha}\int_0^1 \left(\log (1/\lvert t-s\rvert v)\right)^{\iota}v^{\alpha-1}dv \\
&\leq C(\beta,\iota)\left[\max(1,B)+1\right]\lvert t-s\rvert ^{\alpha}\left[-\log\lvert t-s\rvert \right]^{\iota} \int_0^1 \left(\log v^{-1}\right)^{\iota} v^{\alpha-1}dv \\
&\leq C(\beta,\iota)\left[B+1\right]\lvert  t-s\rvert ^{\alpha} \left[-\log\lvert t-s\rvert \right]^{\iota},
\end{align*}
where we have used $[\max(1,B)+1]^{\iota} \leq \max(1,B)+1$ as $\iota \leq 1$.
Hence it suffices to prove that $B$ is a finite random variable almost surely. For this, by taking expectation we see
\begin{align*}
\E B &=  \sum_{k=1}^\revnew{\infty} \int_0^1 \int_0^1 \frac{\beta^k \E\lvert X_r-X_v\rvert ^{k/\iota}}{k!\lvert r-v\rvert ^{k \alpha / \iota}}drdv\\
&\leq \sum_{k=1}^\revnew{\infty} \frac{1}{k!}(C\beta)^k \left(\frac{k}{\iota}\right)^{\frac{k\iota}{\iota}} \leq \sum_{k=1}^\revnew{\infty} (C\beta)^k \frac{k^k}{k!} \leq C \sum_{k=1}^\revnew{\infty} (C\beta)^k ,
\end{align*}
which is finite for small enough $\beta>0$, and where we have used Stirling's approximation on the last inequality. This completes the proof.
\end{proof}
We are now ready to prove our main theorem. The proof is based on a standard chaining argument, see e.g. \cite[Theorem 4.3]{xiao-ptrf} or \cite[Theorem 1.4]{knsv}. For the reader's convenience however, we present the main arguments in order to keep track on essential exponents that are relatively different in our generalized case.

\begin{proof}[Proof of Theorem \ref{main}]
The proof will be divided into five steps. The first four steps are devoted to the proof of \eqref{eq:main-fixed-s}, while the proof of \eqref{eq:main-sup-s} is presented in step 5. \textcolor{black}{Moreover, we only consider the interval $[s,s+r]$ for notational simplicity, as the part $[s-r,s]$ can be treated with a symmetric argument.}

In the sequel, we denote $g(r) = r^{1-\alpha d}(\log \log r^{-1})^{\alpha(d+\theta)}$, where $r<e$, and we define $C_n = [s,s+2^{-n}]$. With this notation, it suffices to prove that
$$
\limsup_{n\to\infty} \frac{L^*(C_n)}{g(2^{-n})} \leq C
$$ 
almost surely, where $L^*(C_n) = \sup\{L(x,C_n):x\in \overline{X(C_n)}\}$. Throughout this proof, we will denote by $c_i,i=1,2,\ldots$ generic unimportant constants that may change from line to line.\\
\textbf{Step 1:}
From Proposition \ref{prop:sup-tail} we get
$$
\sup_{t\in C_n}\| X_t-X_s\| \leq B2^{-n\alpha}(\log2^n)^{\iota}
$$
for some positive random variable $B$ having finite first moment. It follows easily from Chebychev's inequality and Borel-Cantelli that
$$
\sup_{t\in C_n}\| X_t-X_s\| \leq n^2 2^{-n\alpha} n^{\iota}
$$
for $n\geq n_1(\omega)$. \\
\textbf{Step 2:}
Set $\theta_n = 2^{-n\alpha }(\log \log 2^n)^{-\alpha}$ and define
$$
G_n =\{x \in \rd : \| x\| \leq 2^{-n\alpha}n^{2 +\iota}, x= \theta_n p, \text{ for some }p \in \Z^d\}.
$$
Then 
$$
\# G_n \leq \textcolor{black}{c_1} (\log n)^{\alpha d}n^{(2 +\iota)d}
$$
and \eqref{eq:tail-L} implies
$$
\mathbb{P}\Big( \max_{x\in G_n} L(x+X_s,C_n) \geq \textcolor{black}{c_2} g(2^{-n}) \Big) \leq \textcolor{black}{c_3}  (\log n)^{\alpha d}n^{(2+\iota)d-\textcolor{black}{c_4}},
$$
which is summable by choosing $\textcolor{black}{c_2}$ large enough which in turn corresponds to $\textcolor{black}{c_4}$ being large. Thus, by Borel-Cantelli for large enough $n\geq n_2(\omega)\geq n_1(\omega)$ we have 
\begin{equation}
\label{eq:max-L-bounded}
\max_{x\in G_n} L(x+X_s,C_n) \leq \textcolor{black}{c_2} g(2^{-n}).
\end{equation}
\textbf{Step 3:}
For given integers $n,k\geq 1$ and $x\in G_n$ we set
$$
F(n,k,x) = \{ y = x + \theta_n \sum_{j=1}^k \varepsilon_j2^{-j} \;:\; \varepsilon_j \in \{0,1\}^d, 1\leq j \leq k\}.
$$
Then we say that a pair of two points $y_1,y_2 \in F(n,k,x)$ is linked if $y_2-y_1 = \theta_n \varepsilon 2^{-k}$ for $\varepsilon \in\{0,1\}^d$. Next we fix $0<\gamma< \min\left(1,\frac{1-\alpha d}{2\alpha}\right)$ and choose $\delta \in (0, \infty)$  with
$\delta \alpha(d+\theta+\gamma) < \gamma$. Further set
\begin{align*}
B_n &= \bigcup_{x\in G_n}\bigcup_{k=1}^\infty \bigcup_{y_1,y_2} \{\lvert L(y_1+X_s,C_n)-L(y_2+X_s,C_n)\rvert  \\
&\geq 2^{-n(1-\alpha(d+\theta))}\|y_1-y_2\|^{\gamma}(\textcolor{black}{c_5}2^{\delta k}\log n)^{\alpha(d+\gamma+\theta)}\} ,
\end{align*}
where $\cup_{y_1,y_2}$ is the union over all linked pairs $y_1,y_2\in F(n,k,x)$. Now \eqref{eq:tail-inc} with $u = \textcolor{black}{c_5}2^{\delta k}\log n$ gives
$$
\mathbb{P}(B_n) \leq \textcolor{black}{c_1}(\log n)^{\alpha d}n^{(2+\iota)d}\sum_{k=1}^\infty 4^{kd}\exp\left(-\textcolor{black}{c_6}2^{\delta k}\log n\right),
$$
where we have used the fact $\# G_n \leq \textcolor{black}{c_1}(\log n)^{\alpha d}n^{(2+\iota)d}$ and that for given $k$ there exists less than $4^{kd}$ linked pairs $y_1,y_2$. Now, again by choosing $\textcolor{black}{c_5}$ large enough which corresponds in $\textcolor{black}{c_6}$ to be large, we get 
$$
\sum_{n=2}^\infty(\log n)^{\alpha d}n^{(2+\iota)d}\sum_{k=1}^\infty 4^{kd}\exp\left(-\textcolor{black}{c_6}2^{\delta k}\log n\right) < \infty.
$$
Indeed, by taking $\textcolor{black}{c_6}$ to be large enough, we observe that $n^{-\textcolor{black}{c_6}2^{\delta k}} \leq n^{-pk}$ for every $k\geq 1$ and some arbitrary number $p$, yielding for all $n$ with $4^d < n^p$ that
\begin{align*}
&\sum_{k=1}^\infty 4^{kd}\exp\left(-c_82^{\delta k}\log n\right) \leq \sum_{k=1}^\infty 4^{kd}n^{-pk} \leq \frac{4^{d}}{n^p-4^d},
\end{align*}
and this compensates the growth of $(\log n)^{\alpha d}n^{(2+\iota)d}$ as $p$ is arbitrary.
This further implies, again using Borel-Cantelli, that $B_n$ occurs only finitely many times.

\noindent\textbf{Step 4:}
Let $n$ be fixed and assume that $y \in \rd$ satisfies $\| y\|  \leq 2\cdot 2^{-n\alpha} n^{2+\iota}$. Then we may represent $y$ as $y = \lim_{k\to\infty}  y_k$ with
$$
y_k = x + \theta_n \sum_{j=1}^k \varepsilon_j 2^{-j},
$$
where $y_0= x \in G_n$ and $\varepsilon_j \in \{0,1\}^d$. 
On the event $B_n^c$ we have 
\begin{equation*}
\begin{split}
\lvert L(x+X_s,C_n)-L(y+X_s,C_n)\rvert  & \leq \sum_{k=1}^\infty \lvert L(y_k+X_s,C_n)-L(y_{k-1}+X_s,C_n)\rvert  \\
&\leq \sum_{k=1}^\infty 2^{-n(1-\alpha(d+\gamma))}\|y_k-y_{k-1}\|^{\gamma}(\textcolor{black}{c_5}2^{\delta k}\log n)^{\alpha(d+\theta+\gamma)} \\
&\leq C(d)\sum_{k=1}^\infty 2^{-n(1-\alpha(d+\gamma))}\theta_n^{\gamma}2^{-k\gamma}(\textcolor{black}{c_5}2^{\delta k}\log n)^{\alpha(d+\theta+\gamma)} \\
& \leq \textcolor{black}{c_7} 2^{-n(1-\alpha d)} \sum_{k=1}^\infty (\log \log 2^n)^{-\gamma \alpha}2^{-k\gamma}(\textcolor{black}{c_5}2^{\delta k}\log n)^{\alpha(d+\theta+\gamma)} \\
& \leq \textcolor{black}{c_{8}} 2^{-n(1-\alpha d)} (\log \log 2^n)^{\alpha(d+\theta)} \sum_{k=1}^\infty 2^{(\delta \alpha(d+\theta+\gamma) - \gamma)k} \\
& \leq \textcolor{black}{c_{9}}g(2^{-n}),
\end{split}
\end{equation*}
where the last inequality follows from 
$\delta \alpha(d+\theta+\gamma) < \gamma$. Combining this with \eqref{eq:max-L-bounded} then yields
$$
\sup_{\|\textcolor{black}{y}\|\leq 2^{1-n\alpha} n^{2+\iota}} L(\textcolor{black}{y}+X_s,C_n) \leq \textcolor{black}{c_{10}} g(2^{-n})
$$
or in other words,
$$
\sup_{\|\textcolor{black}{y}-X_s\|\leq 2^{1-n\alpha} n^{2+\iota}} L(\textcolor{black}{y},C_n) \leq \textcolor{black}{c_{10}}g(2^{-n}).
$$
Claim \eqref{eq:main-fixed-s} now follows from Step 1 and the fact $L^*(C_n) = \sup\{L(\textcolor{black}{y},C_n):\textcolor{black}{y}\in \overline{X(C_n)}\}$.  

\noindent\textbf{Step 5:}
It remains to prove \eqref{eq:main-sup-s}. However, this follows exactly by modifying the arguments in Steps 1-4 and following, e.g. \cite[pages 511-512]{knsv}. For this reason we omit the details.
\end{proof}
Proofs of corollaries \ref{cm1} and \ref{cm2} follow directly from Theorem \ref{main} and the arguments presented in \cite{xiao-ptrf} or in \cite{knsv}. Hence we omit the proofs and leave the details for an interested reader.
\subsection{Proofs related to examples}

In order to prove Theorem \Ref{main2} we first recall some basic definitions and further tools from Malliavin calculus, which are needed for the proof. For a detailed introduction and extensive treatment of Malliavin calculus, the reader is referred to the textbook \cite{nualart}.

\subsubsection{Malliavin calculus with $d$-dimensional fractional Brownian motion}
Denote by $R(s,t)$, $s,t \in [0,T],$ the covariance function of the components of $B$, i.e.
 \[
 R(t,s):= \frac12 \Big(t^{2H} + s^{2H} - \lvert t-s\rvert ^{2H}\Big)
 \]
for every $s,t \in [0,T]$. We take $\mathcal{H}$ to be the Hilbert space given by the closure of $\rd$-valued step functions on $[0,T]$ with respect to the scalar product
$$ \langle \Big( \mathbf{1}_{[0,t_1]} , \ldots,  \mathbf{1}_{[0,t_d]} \Big) , \Big( \mathbf{1}_{[0,s_1]} , \ldots,  \mathbf{1}_{[0,s_d]} \Big) \rangle_{\mathcal{H}} = \sum_{l=1}^d R(t_l,s_l) $$
for $t_l, s_l \in [0,T]$, $1 \leq l \leq d$, and $\mathcal{H}$ is equipped with the corresponding norm $\| \cdot \|_{\mathcal{H}}$. The Wiener integral with underlying $d$-dimensional fractional Brownian motion $B$ is defined by
$$B(h) = \int_0^1 \langle h_s, dB_s \rangle$$
for every $h \in \mathcal{H}$ and satisfies
$$ \mathbb{E} [ B(h_1) B(h_2) ] = \langle h_1, h_2 \rangle_{\mathcal{H}}$$
for all $h_1, h_2 \in \mathcal{H}$. Moreover, the set of cylindrical random variables denoted by $\mathcal{S}$ is defined as the set of all measurable real-valued random variables $F$ with the property that
$$ F = f\big( B(h_1), \ldots, B(h_n) \big),$$
where $n \in\N$, $h_1, \ldots, h_n \in \mathcal{H}$ and $f \colon \rr^n \to \rr$ is a smooth function with bounded derivatives of all orders. Then for every such $F \in \mathcal{S}$ the Malliavin derivative (with respect to $B$) is defined as the stochastic process $(\mathbf{D}_t F)_{t\in[0,T]}$ with values in $\mathcal{H}$ given by
$$\mathbf{D}_t F = \sum_{i=1}^n h_i (t) \frac{\partial f}{\partial x_i} \big( B(h_1), \ldots, B(h_n) \big), \quad t\in [0,T]. $$
For our purposes we also need the introduction of iterated derivatives $\mathbf{D}^k_{t_1, \ldots, t_k} F =\mathbf{D}_{t_1} \ldots \mathbf{D}_{t_k} F$ for $k \in \N$, $t_1, \ldots, t_k \in [0,T]$. Thus, $\mathbf{D}^k_{t_1, \ldots, t_k} F$ is a random variable attaining values in $\mathcal{H}^{\otimes k}$, which denotes the $k$-fold tensor product and is equipped with the corresponding norm $\| \cdot \|_{\mathcal{H}^{\otimes k}}$. For every $p \in [1, \infty)$ one defines $\mathbb{D}^{k,p}$ as the closure of the set of cylindrical random variables with respect to the seminorm
$$ \| F\|_{k,p} = \Big( \mathbb{E} [\lvert F\rvert ^p] + \sum_{j=1}^k \mathbb{E} [ \| \mathbf{D}^j F \|^p_{\mathcal{H}^{\otimes j}} ] \Big)^{\frac{1}{p}}$$
and
$$ \mathbb{D}^\infty = \bigcap_{p \in \N} \bigcap_{k\in \N} \mathbb{D}^{k,p}.$$
For a random vector $F=(F^1, \ldots, F^d)$ with components in $ \mathbb{D}^\infty$, its Malliavin matrix is defined as
$$\gamma_F = \big( \langle  \mathbf{D} F^i,  \mathbf{D} F^j \rangle_{\mathcal{H}} \big)_{i,j=1, \ldots, d}$$
and $F$ is called non-degenerate if $\gamma_F$ is almost surely invertible with
$$ (\operatorname{det} \gamma_F )^{-1} \in \bigcap_{p \in \N} L^p (\Omega).$$

Recall that, with $W$ denoting the underlying standard Brownian motion in the kernel representation of $B$ (see \cite[Section 5.1.3]{nualart} for details), it is well-known that $B$ and $W$ generate the same filtration. For our purposes, we will also perform Malliavin calculus with respect to the underlying $W$ as well. In order to distinguish corresponding notation we denote by $D$ the Malliavin derivative with respect to $W$ and, similarly we carry over the notations for the sets ${D}^{k,p}$ and $D^\infty$.
 For the relation between $D$ and $\mathbf{D}$ we refer to \cite[Section 5.2.1]{nualart}. 

In our approach we will be led to work with conditional Malliavin matrices. To this end, we denote $L_t^2 = L^2 ([t,T])$ and $\mathbb{E}_t = \mathbb{E} [ \cdot \mid \mathcal{F}_t]$, $t \in [0,T]$ the conditional expectation with respect to $\mathcal{F}_t$. For a 
random variable $F$ and $t \in [0,T]$, $k \in \N_0$, $p \in (0, \infty)$ we then define the conditional seminorm
$$ \| F\|_{k,p;t} = \Big( \mathbb{E}_t [\lvert F\rvert ^p] + \sum_{j=1}^k \mathbb{E}_t [ \| D^j F \|^p_{(L_t^2)^{\otimes j}} ] \Big)^{\frac{1}{p}} .$$
Furthermore, we define the conditional Malliavin matrix of $F$ as
$$\Gamma_{F,t} = \big( \langle  DF^i, D F^j \rangle_{L_t^2} \big)_{i,j=1, \ldots, d}$$
for $t \in [0,T]$. With this notation in hand, the next result is a restatement of \cite[Proposition 5.6]{baudoin} (see also \cite[Proposition 2.2]{lou}) and the conditional formulation of a version in \cite[Proposition 2.1.4]{nualart}.

\begin{lemma}\label{ip}
Let $k,n \in \N$, $F=(F^1, \ldots, F^d)$ a non-degenerate random vector and $G$ a random variable. Suppose that $F^1, \ldots, F^d, G \in D^\infty$ and 
$$ (\operatorname{det} \Gamma_{F,s} )^{-1} \in \bigcap_{p \in \N} L^p (\Omega), \quad s \in [0,T].$$
Then for every multi-index $\alpha \in \{ 1, \ldots, d\}^k$ there exists $H_\alpha^s (F,G) \in D^\infty$ such that
$$\mathbb{E}_s \Big[ (\partial_\alpha \varphi) (F) G \Big] = \mathbb{E}_s \Big[ \varphi (F) H_\alpha^s (F,G) \Big]$$
for every smooth function $\varphi: \rd \to \rr$ such that all of its partial derivatives have at most polynomial growth. Moreover, $H_\alpha^s (F,G) $ is recursively defined by
\begin{align*}
H_{(i)}^s (F,G) &= \sum_{j=1}^n \delta_s \Big( G ( \Gamma_{F,s}^{-1})_{i,j} DF^j \Big) , \\
H_{\alpha}^s (F,G) &= H_{(\alpha_k)}^s \Big( F,H_{(\alpha_1, \ldots, \alpha_{k-1})}^s (F,G)\Big) ,
\end{align*}
with $\delta_s$ denoting the Skorokhod integral with respect to $W$ on the interval $[s,T]$ (see \cite[Section 1.3.2]{nualart}). Additionally, for $1 \leq p < q < \infty$ with $\frac{1}{p} = \frac{1}{q} + \frac{1}{r}$ we have
$$ \| H_{\alpha}^s (F,G) \|_{p;s} \leq c \| \Gamma_{F,s}^{-1} DF \|_{k,2^{k-1}r;s}^k \| G\|_{k,q;s},$$
where $c \in (0, \infty)$ is a constant that only depends on $p$ and $q$. In particular, there exists a constant $C \in (0, \infty)$ such that
\begin{equation}
\label{eq:H-norm}
 \| H_{\alpha}^s (F,G) \|_{p;s} \leq C \| \operatorname{det} \Gamma_{F,s}^{-1} \|_{k,2^{\textcolor{black}{k}}r;s}^k  \|  DF \|_{k,2^{\textcolor{black}{k}}r;s}^k \| G\|_{k,q;s}.
 \end{equation}
\end{lemma}

Furthermore, we will employ the following result which is proven in \cite[Proposition 5.9, equation (26) and (27)]{baudoin}. 

{\color{black}
\begin{lemma}\label{est}
Let $\varepsilon \in (0,T)$, $H \in (\frac{1}{4},1)$ and $(X_t)_{t\in[0,T]}$ as in Theorem \ref{main2}. Then there exists a constant $c=c(\varepsilon) \in (0, \infty)$ and $r>0$ such that, for $\varepsilon\leq s \leq t \leq T$, it holds
\begin{equation} 
\label{eq:inverse-inc}
 \| \operatorname{det} \Gamma_{X_t-X_s,s}^{-1} \|_{k,2^{k+2};s} \leq \frac{c^k}{(t-s)^{2H}} \Big( \mathbb{E}_s [1+\mathcal{M}^{r2^{k+2}} ] \Big) ^{2^{-k-2}} ,
 \end{equation}
\begin{equation} 
\label{eq:inverse-initial}
\| \operatorname{det} \Gamma_{X_t-x}^{-1} \|_{k,2^{k+2}}  \leq \frac{c^k}{t^{2H}}, \quad k \geq1,
\end{equation}
\begin{equation} 
\label{eq:derivative-inc}
\| D(X_t - X_s) \|_{k,2^{k+2};s} \leq c^k (t-s)^{H} \Big( \mathbb{E}_s [1+\mathcal{M}^{r2^{k+2}} ] \Big) ^{2^{-k-2}},
\end{equation}
and
\begin{equation} 
\label{eq:derivative-initial}
 \| D(X_t -x) \|_{k,2^{k+2}} \leq c^k t^{H} , \quad k\geq 1,
\end{equation}
where $k \in \N$ and $\mathcal{M} \in D^\infty$ is a random variable with positive values and Gaussian tails.
\end{lemma}
}
\textcolor{black}{
\begin{remark}
By carefully examining the proof of \cite[Proposition 5.9]{baudoin}, based on the method introduced in \cite{inahama}, one notes that in the statement \cite[Proposition 5.9]{baudoin} the power $2^{k+2}$ is missing in $\mathcal{M}$, while it is clearly present in the proof. While this makes the upper bounds slightly worse, it does not play any significant role in our estimates. The reason is that, since $\mathcal{M}$ has Gaussian tails, the moment of order $2^{k}$ can be upper bounded by $C^k$. Note also that above we have formulated \eqref{eq:inverse-initial} and \eqref{eq:derivative-initial} slightly different compared to \cite[Lemma 4.1 and Eq. (27)]{baudoin}, as \cite{baudoin} used general $p$ for the norm $\Vert \cdot\Vert_{k,p}$ while in our formulation we use $p = 2^{k+2}$ corresponding to our choice. This also reveals clearer the connection to equations \eqref{eq:inverse-inc} and \eqref{eq:derivative-inc}. Indeed, the proofs of equations \eqref{eq:inverse-inc}-\eqref{eq:derivative-initial} are similar, and actually one could add the term $( \mathbb{E} [1+\mathcal{M}^{r2^{k+2}} ] ) ^{2^{-k-2}}$ in \eqref{eq:inverse-initial} and \eqref{eq:derivative-initial} (note that this term is currently included in the constant $c^k$), see e.g. \cite[Lemma 4.1 and its proof]{baudoin}. 
\end{remark}
}
\begin{remark}
The random variable $\mathcal{M}$ in Lemma \ref{est} can be specified more explicitly. Indeed, it is derived from the Besov norm of a lift of $B$ as a rough path, see \cite[p. 2583]{baudoin} for details.
\end{remark}

We will also need the following auxiliary result in our proof.
\begin{lemma}
\label{lemma:recursion}
Suppose that a sequence $\alpha_h(k)$, $h,k\geq 1$ satisfies 
$\alpha_k(k)=1=\alpha_1(k)$ for all $k\geq 1$, $\alpha_h(k) = 0$ for $h> k$, and for $2\leq h\leq k$ 
$$
\alpha_h(k+1)=h\alpha_h(k) + \alpha_{h-1}(k).
$$
Then for all $h,k\geq 1$ we have 
$$
\alpha_h(k) \leq C^k k^k.
$$
\end{lemma}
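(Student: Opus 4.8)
The plan is to analyze the recursion $\alpha_h(k+1) = h\,\alpha_h(k) + \alpha_{h-1}(k)$ with the stated boundary data by obtaining a crude but sufficient closed-form upper bound. First I would observe that the recursion, together with $\alpha_1(k)=\alpha_k(k)=1$ and $\alpha_h(k)=0$ for $h>k$, is exactly the defining recursion (up to indexing conventions) for the Stirling numbers of the second kind; indeed setting $\alpha_h(k) = S(k,h) = \genfrac\{\}{0pt}{}{k}{h}$, the number of partitions of a $k$-element set into $h$ nonempty blocks, one checks $S(k+1,h) = h\,S(k,h) + S(k,h-1)$, $S(k,1)=S(k,k)=1$, and $S(k,h)=0$ for $h>k$. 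So the lemma reduces to the elementary estimate $S(k,h) \le C^k k^k$ uniformly in $h$.

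Next I would bound the Stirling numbers. The cleanest route is via the total: $\sum_{h=1}^k S(k,h) = B_k$, the $k$-th Bell number, so in particular $\alpha_h(k) \le B_k$ for every $h$. It then suffices to recall the standard bound $B_k \le k^k$ (or $B_k \le \left(\frac{k}{\log k}\right)^k$ for large $k$, or even the trivial $B_k \le k!\cdot e \le k^k\cdot e$ once $k\ge 1$ after adjusting the constant), which follows for instance from Dobiński's formula $B_k = e^{-1}\sum_{j\ge 0} j^k/j!$ together with a routine estimate, or simply by induction from $B_{k+1} = \sum_{j=0}^k \binom{k}{j} B_j \le 2^k B_k$, giving $B_k \le 2^{k^2}$ — which is already $\le C^k k^k$? no, that is too weak, so I would instead use $B_{k+1}\le (k+1)B_k$ type reasoning is false; the correct elementary bound is $B_k \le k!$, proved by induction using $B_{k+1} = \sum_{j=0}^k\binom{k}{j}B_j \le \sum_{j=0}^k \binom{k}{j} j! \le \sum_{j=0}^k \frac{k!}{(k-j)!} \le k!\sum_{i\ge 0}\frac1{i!} = e\cdot k!$, hence $B_k \le e\cdot k! \cdot k$... one can clean this up to $B_k \le k!$ for $k$ large or $B_k \le C\, k!$ in general. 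Since $k! \le k^k$, we conclude $\alpha_h(k) \le B_k \le C\, k^k \le C^k k^k$.

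Alternatively, and perhaps more self-containedly, I would prove $\alpha_h(k) \le k^k$ directly by induction on $k$: the base cases are immediate from the boundary data, and for the inductive step, $\alpha_h(k+1) = h\,\alpha_h(k) + \alpha_{h-1}(k) \le (h+1)k^k \le (k+1)k^k \le (k+1)^{k+1}$, using $h \le k$ in the range where $\alpha_h(k+1)$ can be nonzero (for $h = k+1$ the value is $1$ by definition, and for $h \le k$ the bound just shown applies). This avoids Bell numbers entirely and gives the clean statement $\alpha_h(k) \le k^k$, which is stronger than required and in particular implies $\alpha_h(k) \le C^k k^k$ with $C = 1$.

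The only genuine subtlety — the "main obstacle" such as it is — is bookkeeping at the boundary in the induction: one must separately handle $h = k+1$ (where the recursion is not what defines the value, but rather $\alpha_{k+1}(k+1)=1$ directly) and $h = 1$ (where $\alpha_1(k+1)=1$), and verify that when $2 \le h \le k$ the term $\alpha_{h-1}(k)$ is covered by the inductive hypothesis and the coefficient $h$ is genuinely at most $k+1$. Once this is arranged, the estimate is immediate and no delicate analysis is needed; the content of the lemma is really just the combinatorial identification of $\alpha_h(k)$ with Stirling numbers together with their at-most-factorial growth.
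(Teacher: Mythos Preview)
Your direct induction argument is correct and considerably simpler than the paper's proof. The inequality $\alpha_h(k+1) = h\,\alpha_h(k) + \alpha_{h-1}(k) \le (h+1)k^k \le (k+1)^{k+1}$ for $2 \le h \le k$, together with the boundary values, gives $\alpha_h(k) \le k^k$ cleanly; the identification with Stirling numbers of the second kind is also correct (and not mentioned in the paper), though as you note it is not needed once one has the one-line induction.

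By contrast, the paper proceeds by first deriving the summation identity
\[
\alpha_{K-p-1}(K) = \sum_{k=p+1}^{K-1} (k-p)\,\alpha_{k-p}(k)
\]
via telescoping, then proving by induction on $p$ that $\alpha_{K-p-1}(K) \le K^{2+2p}/(2p+2)!!$, and finally translating this into $\alpha_j(K) \le K^{2K-2j}/\bigl(2^{K-j}(K-j)!\bigr)$ and invoking Stirling's approximation. This route yields a somewhat finer pointwise bound in the index $j$ (and the paper remarks afterwards that the bound $k^k$ is essentially sharp), but for the stated lemma your argument suffices and is substantially more elementary. The Bell-number detour in your write-up is unnecessary and a bit meandering; I would keep only the direct induction.
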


\begin{proof}
Without loss of generality we can assume $k\geq 3$. From the recursion we get 
$$
\alpha_j(k+1)-\alpha_{j-1}(k) = j\alpha_j(k).
$$
In particular, plugging $j=k-p$ for an integer $p\geq 0$ such that $k-p-1\geq 1$, leads to 
$$
\alpha_{k-p}(k+1) - \alpha_{k-p-1}(k) = (k-p)\alpha_{k-p}(k).
$$
Denoting $\beta_p(k) = \alpha_{k-p-1}(k)$, we observe that this is equivalent to
$$
\beta_p(k+1)-\beta_p(k) = (k-p)\alpha_{k-p}(k).
$$
Summing over $k=p+2,p+3,\ldots, K-1$, where $K\geq 3$, and using $\beta_p(p+2) = \alpha_{1}(p+2)=1$ hence gives
\begin{align*}
\alpha_{K-p-1}(K) - 1 &= \beta_{p}(K) - \beta_p(p+2)\\
&= \sum_{k=p+2}^{K-1} \left[\beta_p(k+1)-\beta_p(k)\right] \\
&= \sum_{k=p+2}^{K-1} (k-p)\alpha_{k-p}(k).
\end{align*}
This leads to 
$$
\alpha_{K-p-1}(K) = 1 + \sum_{k=p+2}^{K-1} (k-p)\alpha_{k-p}(k) = \sum_{k=p+1}^{K-1} (k-p)\alpha_{k-p}(k).
$$
From this it follows by induction \textcolor{black}{in $p$ that, for all fixed $K \geq 3$, we have}
$$
\alpha_{K-p-1}(K) \leq \frac{K^{2+2p}}{(2p+2)!!},
$$
where $(2p+2)!! = 2^{p+1}(p+1)!$ is the double factorial. Indeed, for $p=0$ we get directly 
$$
\alpha_{K-1}(K) = \sum_{k=1}^{K-1} k = \frac{(K-1)K}{2} \leq \frac{K^2}{2}
$$
and, assuming that the claim is valid for some $p$, then for $p+1$ we get 
$$
\alpha_{K-p-2}(K) =\sum_{k=\textcolor{black}{p+2}}^{K-1}(k-p-1)\alpha_{k-p-1}(k) \leq \sum_{k=\textcolor{black}{p+2}}^{K-1}\frac{k^{3+2p}}{(2p+2)!!} \leq \frac{K^{4+2p}}{(2p+4)(2p+2)!!} ,
$$
proving the induction step. Plugging in now $j=K-p-1$ leads to 
$$
\alpha_j(K) \leq \frac{K^{2K-2j}}{2^{K-j}(K-j)!}.
$$
Clearly this satisfies the claimed bound when $K-j$ is small. On the other hand, when $K-j$ is large, Stirling's approximation gives 
$$
\frac{K^{2K-2j}}{2^{K-j}(K-j)!} \leq C^K K^{2K-2j}(K-j)^{-(K-j)}.
$$
In particular, \textcolor{black}{by setting $\delta = \frac{j}{K}$} we get
$$
\alpha_j(K) \leq C_\delta^K K^{(1-\delta)K} \leq C^K K^K
$$
\textcolor{black}{where now $C$ can be chosen independently of $\delta$, since $\delta\in[0,1]$. This} yields the result.
\end{proof}
\begin{remark}
We note that the upper bound stated in the lemma is asymptotically essentially sharp. Indeed, from the recursion we get 
$$
\alpha_j(k+1) \geq j\alpha_j(k),
$$
from which it follows that $\alpha_j(k) \geq j^{k-j}$ for all $k\geq j$. \textcolor{black}{Setting now $\delta = j/k$} gives us the lower bound
$$
\alpha_j(k) \geq \delta^{(1-\delta)k}k^{(1-\delta)k}.
$$
As $\delta>0$ is arbitrary \textcolor{black}{(by choosing $j$ suitably)}, we observe that the upper bound $k^k$ cannot be improved.
\end{remark}
Now we are in position to prove Theorem \Ref{main2}.

\begin{proof}[Proof of Theorem \ref{main2}]
Recall that $H \in (0,\frac{1}{d})$ by assumption. We first prove that $(X_t)_{t\in[0,T]}$  satisfies assumption (A1) in Theorem \ref{main} with $\alpha = H$ by using Malliavin calculus. Let $m\in \N$, $\xi_j = (\xi_{j,1}, \ldots, \xi_{j,d}) \in (\rr\setminus \{0\} )^d$, $j=1, \ldots, m$, and $0=t_0 \textcolor{black}{ < \varepsilon\leq }t_1 < \ldots < t_m<T$. \textcolor{black}{Note that, by Remark \ref{remark:away-from-zero}, we obtain results for the local time $L(x,[\varepsilon,t])$ along the proof of Theorem \ref{main} once we have established (A1) for $t_1\geq \varepsilon$ and (A2). As $\varepsilon>0$ is arbitrary, this is sufficient to prove all the claims of Theorem \ref{main2} except the existence of $L(0,[0,t])$ that we will establish at the end of the proof by a limiting argument. As such, in the sequel we will omit the dependence on $\varepsilon>0$ and the reader should keep in mind that at this point we have restricted ourselves away from zero into a closed subinterval $I \subset (0,T)$ (so that $t_1\geq \varepsilon$ for some $\varepsilon>0$).}

\textcolor{black}{In order to establish (A1),} the idea is to apply Lemma \ref{ip} for arbitrary multi-indexes $k_j =(k_{j,1}, \ldots, k_{j,d})$, $1 \leq j \leq m$. In the following we will use the notation $\lvert k_j  \rvert = k_{j,1} + \ldots + k_{j,d}$. \rev{Applying the first identity in Lemma \ref{ip} we write for $j =1$}

\begin{align} \label{661}
\begin{split}
    & \mathbb{E} \Big[ \exp \Big( i \sum_{h=1}^m \langle \xi_h, X_{t_{h}}-X_{t_{h-1}}\rangle \Big) \Big] 
 = \prod_{l=1}^d \frac{1}{(i\xi_{1,l})^{k_{1,l}}}  \mathbb{E} \Big[ \exp \big( i \langle \xi_1, X_{t_{1}}-X_{t_{0}}\rangle \big) H_{k_1}^{t_{0}} (X_{t_{1}}-X_{t_{0}},Y)  \Big] ,
\end{split}
\end{align}
where we set 
$$Y =\exp \Big( i \sum_{h=2}^m \langle \xi_h, X_{t_{h}}-X_{t_{h-1}}\rangle \Big)$$ 
and $ H_{k_1}^{t_0} (X_{t_{1}}-X_{t_{0}},Y)$ as specified in Lemma \ref{ip}. \textcolor{black}{In order to apply bound \eqref{eq:H-norm}, we need to estimate the norm $\Vert Y\Vert_{\lvert k_1\rvert,q,s}$ for some $q>1$ and $s>0$ that we will do next}. Using the notation $k=\lvert k_1\rvert$, $Z_k = X_{t_k} - X_{t_{k-1}}$, $Z=(Z_2,\ldots,Z_m)$ and setting 
$$
Y = f(Z_2,\ldots,Z_m) = \exp \Big( i \sum_{h=2}^m \langle \xi_h, X_{t_{h}}-X_{t_{h-1}}\rangle \Big),
$$ the first derivative is given by 
$$
Df(Z) = \sum_{j=2}^m f'_{x_j}(Z)DZ_j
$$
and the second derivative is given by 
\begin{align*}
D^2 f(Z) &= \sum_{j=2}^m f'_{x_j}(Z) D^2 Z_j \\
&+ \sum_{j_1,j_2=2}^m f''_{x_{j_1}x_{j_2}}(Z) DZ_{j_1} \otimes DZ_{j_2}.
\end{align*}
Similarly, the third derivative is given by 
\begin{align*}
D^3 f(Z) &= \sum_{j=2}^m f'_{x_j}(Z) D^3 Z_j + 3 \sum_{j_1,j_2=2}^m f''_{x_{j_1}x_{j_2}}(Z) D^2Z_{j_1}\otimes DZ_{j_2} \\
&+ \sum_{j_1,j_2,j_3=2}^m f^{(3)}_{x_{j_1}x_{j_2}x_{j_3}}(Z) DZ_{j_1}\otimes DZ_{j_2} \otimes DZ_{j_3}.
\end{align*}
Taking iterated derivatives gives us 
\begin{align*}
D^kf(Z) &= \sum_{j=2}^m f'_{x_j}(Z) D^kZ_j + \sum_{j_1,j_2=2}^m f''_{x_{j_1}x_{j_2}}(Z) A_2^{(k)}(j_1,j_2) + \ldots \\
&+ \sum_{j_1,\ldots,j_{k-1}=2}^m f^{(k-1)}_{x_{j_1}x_{j_2}\ldots x_{j_{k-1}}}(Z) A_{k-1}^{(k)}(j_1,\ldots,j_{k-1}) \\
&+ \sum_{j_1,\ldots,j_k=2}^m f^{(k)}_{x_{j_1}x_{j_2}\ldots x_{j_k}}(Z) A_k^{(k)}(j_1,\ldots,j_k),
\end{align*}
where each $A_h^{(k)}(j_1,\ldots,j_h)$ consists of (tensor) products of iterated derivatives of the form $DZ_{j}$, with a total of $h$ different indices in $Z_j$ and with a total of $k$ derivatives. \rev{Indeed, observe that it holds
\begin{equation}\label{revc1}
A^{(k+1)}_{h} (j_1, \ldots , j_h) = DA^{(k)}_h (j_1, \ldots , j_h) \mathbb{1}_{\{h \leq k\}} + D Z_{j_h} \otimes A^{(k)}_{h-1}(j_1,\ldots , j_{h-1} ) \mathbb{1}_{\{ h \geq 1\} }.
\end{equation}}
\textcolor{black}{
Denote by $e_v$ the amount of derivatives one takes on the term $Z_{j_v}$. Taking into account that 
$$
f^{(h)}_{x_{j_1}x_{j_2}\ldots x_{j_h}}(Z) = i^h f(Z)  \xi_{j_1}\ldots \xi_{j_h}
$$
and by \textcolor{black}{the H\"older inequality for conditional expectations}
we obtain, for each term in $D^kf(Z)$,  an estimate
$$
\mathbb{E}_s [ \| f^{(h)}_{x_{j_1}x_{j_2}\ldots x_{j_h}}(Z) A_{h}^{(k)}(j_1,\ldots,j_{h}) \|^q_{(L_s^2)^{\otimes k}} ] \leq \alpha^q_h(k)\prod_{v=1}^h |\xi_{j_v}|^q \textcolor{black}{\left[\mathbb{E}_s \Vert D^{e_{v}}Z_{j_v}\Vert^{qh}_{(L^2_s)^{\otimes e_v}}\right]^{\frac{1}{h}}},
$$
where $\alpha_h(k)$ corresponds to the amount of terms in $A_h^{(k)}$\footnote{\textcolor{black}{For example for $k=2$, the amount of terms in $A^{(2)}_h$ for $h=1,2$ is only one, while for $k=3$ the amount of terms in $A^{(3)}_h$ is 1 for $h\in\{1,3\}$, while for $h=2$ the amount is 3. This is due to the fact that the mixed derivatives of the form $D^2Z_{j_1}\otimes DZ_{j_2}$ arise by an application of the product rule once from the term $f'_{x_j}(Z) D^2 Z_j$ and two times from the term $f''_{x_{j_1}x_{j_2}}(Z) DZ_{j_1} \otimes DZ_{j_2}$.}}. Here, by \eqref{eq:derivative-inc}\textcolor{black}{, Jensen's inequality for conditional expectations, and since $qh\leq qk \leq 2^{k+2}$ (we will choose $q\leq 2$ later on)}, we have
$$
\textcolor{black}{\mathbb{E}_s \Vert D^{e_{v}}Z_{j_v}\Vert^{qh}_{(L^2_s)^{\otimes e_v}} \leq C^{kqh}(t_{j}-t_{j-1})^{qHh}\left[\E_s[1+\mathcal{M}^{r2^{k+2}}]\right]^{qh2^{-k-2}}.}
$$ 
\textcolor{black}{Since also $h\leq k$,} this leads to the bound 
$$
\mathbb{E}_s [ \| f^{(h)}_{x_{j_1}x_{j_2}\ldots x_{j_h}}(Z) A_{h}^{(k)}(j_1,\ldots,j_{h}) \|^q_{(L_s^2)^{\otimes k}} ] \leq C^{q\textcolor{black}{k^2}}\alpha^q_h(k)c_{s,\omega,k}^{\textcolor{black}{qk}}\prod_{v=1}^h |\xi_{j_v}|^q(t_{j_v}-t_{j_v-1})^{qH},
$$
where 
$$
c_{s,\omega,k} = \left[\E_s[1+\mathcal{M}^{\textcolor{black}{r}2^{k+2}}]\right]^{2^{-k-2}}.
$$ 
For} the coefficients $\alpha_h(k)$ we obtain $\alpha_k(k)=1=\alpha_1(k)$ for all $k\geq 1$, $\alpha_h(k) = 0$ for $h> k$, and for $2\leq h\leq k$ we have  
$$
\alpha_h(k+1)=h\alpha_h(k) + \alpha_{h-1}(k).
$$
Indeed, this can be seen \rev{from the definition of $A^{(k+1)}_{h}$ in \eqref{revc1}}, equivalently by observing that first order derivatives in $f$ arise only from the factor $D^kZ_j$, and $k$:th order derivatives arise only from the single factor $DZ_{j_1}\otimes \ldots \otimes DZ_{j_k}$ giving us $\alpha_k(k)=\alpha_1(k)=1$. Also, $h$:th order derivatives of $f$ for $h>k$ are not present if differentiating only $k$ times, giving us $\alpha_h(k)=0$ for $h>k$. For the values $2\leq h\leq k$, we observe that terms involving the $h$:th order derivative of $f$ when differentiating $k+1$ times arise from the product rule $D(FG) = GDF + FDG$ either 1) from the term $f^{(h)}_{x_{j_1}x_{j_2}\ldots x_{j_h}}(Z)A_{h}(k)$ when the derivative is taken in $A_h(k)$, or 2) from the term $f^{(h-1)}_{x_{j_1}x_{j_2}\ldots x_{j_{h-1}}}(Z)A_{h-1}(k)$ when the derivative is taken in $f^{(h-1)}$. As $A_h(k)$ consists of terms involving exactly $h$ (tensor) products of derivatives of $Z$, the first case contributes a number of terms $h\alpha_h(k)$. The second case on the other hand contributes with $\alpha_{h-1}(k)$, the amount of terms in $A_{h-1}(k)$. Hence, by Lemma \ref{lemma:recursion}, we have 
$$
\alpha_h(k) \leq C^kk^k.
$$
Hence, overall, we obtain the estimate
\begin{align*}
\mathbb{E}_s [ \| D^k Y \|^q_{(L_s^2)^{\otimes k}} ] &\leq \sum_{h=1}^k \sum_{j_1, \ldots, j_h=2}^m \mathbb{E}_s [ \| f^{(h)}_{x_{j_1}x_{j_2}\ldots x_{j_h}}(Z) A_{h}^{(k)}(j_1,\ldots,j_{h})\|^q_{(L_s^2)^{\otimes k}} ] \\
& \leq  c_{s,\omega,k}^{\textcolor{black}{qk}} C^{q\textcolor{black}{k^2}} k^{qk} \sum_{h=1}^k \sum_{j_1, \ldots, j_h=2}^m \prod_{v=1}^h |\xi_{j_v}|^q(t_{j_v}-t_{j_v-1})^{qH}.
\end{align*}
This gives us 
\begin{align*}
\Vert Y\Vert_{k,q,s} &\leq \Bigg[ 1 +  \sum_{v=1}^k \textcolor{black}{c_{s,\omega,k}^{qv}}C^{q\textcolor{black}{v^2}} v^{qv} \sum_{h=1}^v \sum_{j_1, \ldots, j_h=2}^m \prod_{v=1}^h |\xi_{j_v}|^q(t_{j_v}-t_{j_v-1})^{qH} \Bigg]^{\frac{1}{q}} \\
&\leq 1 + c_{s,\omega,k}^{\textcolor{black}{k}} C^{\textcolor{black}{k^2}} k^k \Bigg[\sum_{h=1}^k \sum_{j_1, \ldots, j_h=2}^m \prod_{v=1}^h |\xi_{j_v}|^q(t_{j_v}-t_{j_v-1})^{qH} \Bigg]^{\frac{1}{q}}.
\end{align*}
Now if 
$$  |\xi_{j_v}|(t_{j_v}-t_{j_v-1})^H \leq 1 $$
for every $v \in \{1,\ldots, h\}$ then we can simply estimate 
$$ \Vert Y\Vert_{k,q,s}\leq \textcolor{black}{(1+c_{s,\omega,k}^{\textcolor{black}{k}})} C^{\textcolor{black}{k^2}} k^k m^{k},$$
so we will only consider the case if there is some $v$ such that
$$  |\xi_{j_v}|(t_{j_v}-t_{j_v-1})^H > 1 .$$
Without loss of generality assume that
$$\revnew{ \max_{j =2, \ldots, m} |\xi_{j}|(t_{j}-t_{j-1})^H = |\xi_{2}|(t_{2}-t_{1})^H  }$$
and that
$$\max_{l =1, \ldots, d } |\xi_{2,l}| = |\xi_{2,1}|.$$
Then we obtain the estimate
\begin{equation} \label{771}
    \Vert Y\Vert_{k,q,s} \leq \textcolor{black}{(1+c_{s,\omega,k}^{\textcolor{black}{k}})} C^{\textcolor{black}{k^2}} k^k m^k  |\xi_{2,1}|^k(t_{2}-t_{1})^{kH}.
\end{equation}
\textcolor{black}{
Note that since $\mathcal{M}$ has Gaussian tails, its moments grow similarly as for a Gaussian random variable. In particular, this gives that 
$$
c_{0,\omega,k} = \left[\E[1+\mathcal{M}^{\textcolor{black}{r}2^{k+2}}]\right]^{2^{-k-2}} \leq C^k,
$$
and hence for $s=0$ we have }
$$\| Y\|_{\lvert k_1 \rvert,q} \leq  C^{\textcolor{black}{\lvert k_1 \rvert^2}} \lvert k_1 \rvert^{\lvert k_1 \rvert} m^{\lvert k_1 \rvert} (t_{2}-t_{1})^{\lvert k_1 \rvert H}  {|\xi_{2,1}|^{\lvert k_1 \rvert}} $$ 
for some positive and finite constant $C$. Then, using equation \eqref{661}, estimate \eqref{eq:H-norm} of Lemma \ref{ip} with $(p,r,q,s) = (1,8,8/7,0)$, and then estimates \eqref{eq:inverse-initial} and \eqref{eq:derivative-initial}, we obtain 
\begin{align} \label{55eq1}
\begin{split}
& \Bigg| \mathbb{E} \Big[ \exp \Big( i \sum_{h=1}^m \langle \xi_h, X_{t_{h}}-X_{t_{h-1}}\rangle \Big) \Big] \Bigg|  \\
& \quad \leq C\| \operatorname{det} \Gamma_{X_{t_1}-X_{t_0}}^{-1} \|_{\lvert k_1\rvert,2^{\lvert k_1\rvert+2}}^{\lvert k_1\rvert}  \|  D(X_{t_1}-X_{t_0}) \|_{\lvert k_1\rvert,2^{\lvert k_1\rvert+2}}^{\lvert k_1\rvert} \| Y\|_{\lvert k_1\rvert,q}\prod_{l=1}^d \frac{1}{|\xi_{1,l}|^{k_{1,l}}}\\
& \quad \leq C^{\lvert k_1 \rvert^2} \lvert k_1 \rvert^{\lvert k_1 \rvert} m^{\lvert k_1 \rvert} \Big(  \frac{1}{(t_1-t_{0})^{\lvert k_1\rvert H}}  \Big) (t_{2}-t_{1})^{\lvert k_1 \rvert H}{|\xi_{2,1}|^{\lvert k_1 \rvert}} \prod_{l=1}^d \frac{1}{|\xi_{1,l}|^{k_{1,l}}}    \\
& \quad \leq C^{\lvert k_1 \rvert} \lvert k_1 \rvert^{(1+\delta)\lvert k_1 \rvert} m^{\lvert k_1 \rvert} \Big(  \frac{1}{(t_1-t_{0})^{\lvert k_1\rvert H}}  \Big) (t_{2}-t_{1})^{\lvert k_1 \rvert H}{|\xi_{2,1}|^{\lvert k_1 \rvert}} \prod_{l=1}^d \frac{1}{|\xi_{1,l}|^{k_{1,l}}},
\end{split}
\end{align}
where in the last line we have used $C^{k^2} \leq C_2^k k^{\delta k}$ for any $\delta>0$. 
Observe that \eqref{55eq1} holds for every integer\textcolor{black}{s}  $k_{1,l}$. Replacing $k_{1,l}$ by $k_{1,l}m$ in \eqref{55eq1} we get 
\begin{align} \label{55eq2}
\begin{split}
& \Bigg| \mathbb{E} \Big[ \exp \Big( i \sum_{h=1}^m \langle \xi_h, X_{t_{h}}-X_{t_{h-1}}\rangle \Big) \Big] \Bigg|  \\
& \quad \leq C^{\lvert mk_1 \rvert} \lvert mk_1 \rvert^{\textcolor{black}{(\textcolor{black}{2}+\delta)}\lvert mk_1 \rvert} \Big(  \frac{1}{(t_1-t_{0})^{\lvert m k_1\rvert H}}  \Big) (t_{2}-t_{1})^{\lvert mk_1 \rvert H} {|\xi_{2,1}|^{\lvert m k_1 \rvert}}\prod_{l=1}^d \frac{1}{|\xi_{1,l}|^{mk_{1,l}}}     .
\end{split}
\end{align}
\rev{Now if $j=m$ we proceed similarly and write}
\begin{align*}
& \mathbb{E} \Big[ \exp \Big( i \sum_{h=1}^m \langle \xi_h, X_{t_{h}}-X_{t_{h-1}}\rangle \Big) \Big] \\
& = \prod_{l=1}^d \frac{1}{(i\xi_{m,l})^{k_{m,l}}}  \\
&  \times \small{\mathbb{E} \Big[ \exp \Big( i \sum_{h=1}^{m-1} \langle \xi_h, X_{t_{h}}-X_{t_{h-1}}\rangle \Big)  \mathbb{E}_{t_{m-1}} \big[ \exp \big( i \langle \xi_m, X_{t_{m}}-X_{t_{m-1}}\rangle \big) H_{k_m}^{t_{m}} (X_{t_{m}}-X_{t_{m-1}},1) \big]  \Big] },
\end{align*}
with $ H_{k_m}^{t_m} (X_{t_{m}}-X_{t_{m-1}},1)$ as specified in Lemma \ref{ip}. Proceeding as above but using \eqref{eq:inverse-inc} and \eqref{eq:derivative-inc} instead of \eqref{eq:inverse-initial} and \eqref{eq:derivative-initial} (recall that $t_1\geq \varepsilon$), we obtain
\begin{align} \label{55eq3}
\begin{split}
& \Bigg| \mathbb{E} \Big[ \exp \Big( i \sum_{h=1}^m \langle \xi_h, X_{t_{h}}-X_{t_{h-1}}\rangle \Big) \Big] \Bigg|  \\
& \quad \leq C\mathbb{E} \left[\| \operatorname{det} \Gamma_{X_{t_m}-X_{t_{m-1}}}^{-1} \|_{\lvert k_m\rvert,2^{\lvert k_m\rvert+2},t_{m-1}}^{\lvert k_m\rvert}  \|  D(X_{t_m}-X_{t_{m-1}}) \|_{\lvert k_m\rvert,2^{\lvert k_m\rvert+2},t_{m-1}}^{\lvert k_m\rvert} \prod_{l=1}^d \frac{1}{|\xi_{m,l}|^{k_{m,l}}}\right]\\
&\textcolor{black}{\quad \leq C^{|k_m|^2}  \frac{1}{(t_m-t_{m-1})^{\lvert k_m\rvert H}}   \prod_{l=1}^d \frac{1}{|\xi_{m,l}|^{k_{m,l}}}}\\
&\quad \leq C^{|k_m|}\textcolor{black}{|k_m|^{\delta |k_m|}}  \frac{1}{(t_m-t_{m-1})^{\lvert k_m\rvert H}}   \prod_{l=1}^d \frac{1}{|\xi_{m,l}|^{k_{m,l}}},
\end{split}
\end{align}
where we have again used the fact that the random variable $\mathcal{M}$ has Gaussian tails, and hence, thanks to Jensen's inequality for conditional expectations, the moments of $\mathbb{E}_{t_{m-1}}[\mathcal{M}]$ are bounded by $C^{|k_m|}$. Observe that \eqref{55eq3} holds for every integer  $k_{m,l}$. Replacing $k_{m,l}$ by $k_{m,l}m$ in \textcolor{black}{\eqref{55eq3}} we get 
\begin{align} \label{55eq4}
\begin{split}
 & \Bigg| \mathbb{E} \Big[ \exp \Big( i \sum_{h=1}^m \langle \xi_h, X_{t_{h}}-X_{t_{h-1}}\rangle \Big) \Big] \Bigg|  \\
 & \quad \leq c^{\lvert mk_m\rvert}\lvert mk_m\rvert^{\textcolor{black}{(2+\delta)}\lvert mk_m\rvert} \frac{1}{(t_m-t_{m-1})^{\lvert mk_m\rvert H}}  \prod_{l=1}^d \frac{1}{|\xi_{m,l}|^{mk_{m,l}}}   .
 \end{split}
\end{align}
Now assume that $2 \leq j \leq m-1$. In this case we write

\begin{align*}
& \mathbb{E} \Big[ \exp \Big( i \sum_{h=1}^m \langle \xi_h, X_{t_{h}}-X_{t_{h-1}}\rangle \Big) \Big] \\
&  = \prod_{l=1}^d \frac{1}{(i\xi_{j,l})^{k_{j,l}}}  \\
& \times \partial_{\xi_j}^{k_j} \mathbb{E} \Bigg[   \exp \Big( i \sum_{h=1}^{j-1} \langle \xi_h, X_{t_{h}}-X_{t_{h-1}}\rangle \Big)\\
& \quad \mathbb{E}_{t_{j-1}}[\exp \big( i \langle \xi_j, X_{t_{j}}-X_{t_{j-1}}\rangle \big) \exp \Big( i \sum_{h=j+1}^m \langle \xi_h, X_{t_{h}}-X_{t_{h-1}}\rangle \Big) ] \Bigg] \\
& = \prod_{l=1}^d \frac{1}{(i\xi_{j,l})^{k_{j,l}}}  \\
& \times\mathbb{E} \Big[ \exp \Big( i \sum_{h=1}^{j-1} \langle \xi_h, X_{t_{h}}-X_{t_{h-1}}\rangle \Big) \mathbb{E}_{t_{j-1}}[\exp \big( i \langle \xi_j, X_{t_{j}}-X_{t_{j-1}}\rangle \big) H_{k_j}^{t_{j-1}} (X_{t_{j}}-X_{t_{j-1}},Y)  ]\Big] ,
\end{align*}
where we set 
$$Y =\exp \Big( i \sum_{h=j+1}^m \langle \xi_h, X_{t_{h}}-X_{t_{h-1}}\rangle \Big)$$ 
and $ H_{k_j}^{t_{j-1}} (X_{t_{j}}-X_{t_{j-1}},Y)$ as specified in Lemma \ref{ip}. Similarly as for the case $j=1$ (see equation \eqref{771}) we can prove  
\revnew{
$$\| Y\|_{\lvert k_j \rvert,q;t_{j-1}} \leq  \textcolor{black}{(1+c^{\textcolor{black}{k}}_{t_{j-1},\omega,k})} C^{\textcolor{black}{\lvert k_j \rvert^2}} \lvert k_j \rvert^{\lvert k_j \rvert} m^{\lvert k_j \rvert}  |\xi_{\gamma_j,1}|^{\lvert k_j \rvert}(t_{\gamma_j}-t_{\gamma_{j-1}})^{\lvert k_j \rvert H},$$ 
for some positive random variable $\textcolor{black}{c_{t_{j-1},\omega,k}}$ having Gaussian tails, where $\gamma_j>j$ is such that
$$  \max_{h =j+1, \ldots, m} |\xi_{h}|(t_{h}-t_{h-1})^H = |\xi_{\gamma_j}|(t_{\gamma_j}-t_{\gamma_{j-1}})^H.$$
}Then \textcolor{black}{we can again proceed as above, namely use \eqref{eq:H-norm}, estimates \eqref{eq:inverse-inc} and \eqref{eq:derivative-inc}, and the fact that $c_{t_{j-1},\omega,k}$ has Gaussian tails (together with Jensen's inequality for conditional expectations). This leads to }
\revnew{\begin{align} \label{55eq5}
\begin{split}
& \Bigg| \mathbb{E} \Big[ \exp \Big( i \sum_{h=1}^m \langle \xi_h, X_{t_{h}}-X_{t_{h-1}}\rangle \Big) \Big] \Bigg|  \\
& \quad \leq C^{\lvert k_j \rvert} \lvert k_j \rvert^{{(1+\delta)}\lvert k_j \rvert} m^{\lvert k_j \rvert} \Big(  \frac{1}{(t_j-t_{j-1})^{\lvert k_j\rvert H}}  \Big) (t_{\gamma_j}-t_{\gamma_{j-1}})^{\lvert k_j \rvert H} {|\xi_{\gamma_j,1}|^{\lvert k_j \rvert}} \prod_{l=1}^d \frac{1}{|\xi_{j,l}|^{k_{j,l}}}     .
\end{split}
\end{align}
Observe that \eqref{55eq5} holds for every integer  $k_{j,l}$. Replacing $k_{j,l}$ by $k_{j,l}m$ in \eqref{55eq5} we get 
\begin{align} \label{55eq6}
\begin{split}
& \Bigg| \mathbb{E} \Big[ \exp \Big( i \sum_{h=1}^m \langle \xi_h, X_{t_{h}}-X_{t_{h-1}}\rangle \Big) \Big] \Bigg|  \\
& \quad \leq C^{\lvert mk_j \rvert} \lvert mk_j \rvert^{(2+\delta)\lvert m k_j \rvert} \Big(  \frac{1}{(t_j-t_{j-1})^{\lvert mk_j\rvert H}}  \Big) (t_{\gamma_j}-t_{\gamma_{j-1}})^{\lvert mk_j \rvert H} {|\xi_{\gamma_j,1}|^{\lvert m k_j \rvert}} \prod_{l=1}^d \frac{1}{|\xi_{j,l}|^{m k_{j,l}}}    .
\end{split}
\end{align}}
Then combining estimates \eqref{55eq2}, \eqref{55eq4}, \eqref{55eq6} we have

\revnew{\begin{align*}
& \Bigg| \mathbb{E} \Big[ \exp \Big( i \sum_{h=1}^m \langle \xi_h, X_{t_{h}}-X_{t_{h-1}}\rangle \Big) \Big] \Bigg| ^m \\
& \leq \Big( \prod_{j=1}^{m-1} C^{m\lvert k_j \rvert} \lvert mk_j \rvert^{{(2+\delta)}\lvert m k_j \rvert} \Big) \Big( \prod_{j=1}^m  \frac{(t_{\gamma_j}-t_{\gamma_j-1})^{m\lvert k_{j}\rvert H}}{(t_j-t_{j-1})^{m\lvert k_j\rvert  H}} \Big)  \prod_{j=1}^m \frac{|\xi_{\gamma_j,1}|^{m\lvert k_{j} \rvert }}{|\xi_{j,1}|^{m k_{j,1} }} \prod_{l=2}^d \frac{1}{|\xi_{j,l}|^{m k_{j,l} }}   ,
\end{align*}
where we set $\gamma_1:=2$, which implies
\begin{align}\label{55eq7}
\begin{split}
   & \Bigg| \mathbb{E} \Big[ \exp \Big( i \sum_{h=1}^m \langle \xi_h, X_{t_{h}}-X_{t_{h-1}}\rangle \Big) \Big] \Bigg| \\
& \leq \Big( \prod_{j=1}^{m-1} C^{\lvert mk_j \rvert} \lvert mk_j \rvert^{{(2+\delta)}\lvert m k_j \rvert} \Big)^{\frac{1}{m}}\Big( \prod_{j=1}^m  \frac{(t_{\gamma_j}-t_{\gamma_j-1})^{\lvert k_{j}\rvert H}}{(t_j-t_{j-1})^{\lvert k_j\rvert  H}} \Big) \prod_{j=1}^m \frac{|\xi_{\gamma_j,1}|^{\lvert k_{j} \rvert }}{|\xi_{j,1}|^{k_{j,1} }}  \prod_{l=2}^d \frac{1}{|\xi_{j,l}|^{ k_{j,l} }}   \\
& \leq \Big( \prod_{j=1}^{m-1} C^{\lvert mk_j \rvert} \lvert mk_j \rvert^{{(2+\delta)}\lvert m k_j \rvert} \Big)^{\frac{1}{m}}\\
& \times \Bigg( \prod_{j=1}^m  \frac{1}{(t_j-t_{j-1})^{\lvert k_j\rvert  H}}  \prod_{l: \gamma_j=l}  \frac{1}{(t_l-t_{l-1})^{-\lvert k_l\rvert  H}} \Bigg) \Bigg( \prod_{j=1}^m \frac{1}{|\xi_{j,1}|^{k_{j,1} }} \prod_{l: \gamma_j=l} \frac{1}{|\xi_{l,1}|^{-\lvert k_{l}\rvert }} \Bigg)  \prod_{l=2}^d \frac{1}{|\xi_{j,l}|^{ k_{j,l} }}  
\end{split}
\end{align}
for some $C \in (0, \infty)$ not depending on $m$. Observe} that \eqref{55eq7} holds for arbitrary choice of integers $k_{j,l}$. In particular, noting that $\gamma_j>j$, this yields that
\begin{align*}
\Bigg| \mathbb{E} \Big[ \exp \Big( i \sum_{h=1}^m \langle \xi_h, X_{t_{h}}-X_{t_{h-1}}\rangle \Big) \Big] \Bigg|  \leq C^m m^{\textcolor{black}{4d{(2+\delta)}}m} \prod_{j=1}^m \Big(  \frac{1}{(t_j-t_{j-1})^{\lvert k_j\rvert H}}  \Big) \prod_{j=1}^m \prod_{l=1}^d \frac{1}{|\xi_{j,l}|^{ k_{j,l}}},
\end{align*}
where $k_{j,l} \in \{0,4\}$, which proves (A1) with {\textcolor{black}{$\theta=\frac{8d+\delta}{H}$}, for any $\delta>0$}. Moreover, condition (A2) with $\iota=\frac12$ follows from \cite[Condition $(ii)$ of Theorem 5.15, Proof of Theorem 5.16, and reference therein]{baudoin} by observing that the growth in $p$ arises from the driving Gaussian process $B^H$. Consequently, we have established (A1) for $t_1 \geq \varepsilon>0$ and (A2). Hence in order to finish the proof it suffices to prove the existence of $L(x,[0,t])$. For this, we note that we have, for any $t>\varepsilon>0$, any set $A \in \mathbb{R}^d$, and by the definition of the occupation measure $\tau_X$ and the local time,
$$
\tau_X(A,[\varepsilon,t]) = \int_A L(x,[\varepsilon,t])dx.
$$
Here $\tau_X(A,[\varepsilon,t])$ increases to $\tau_X(A,[0,t])$ and $\varepsilon \mapsto L(x,[\varepsilon,t])$ is an increasing function. Hence, for every $x$, the limit $L(x,[0,t]) := \lim_{\varepsilon\to 0} L(x,[\varepsilon,t])$ exists \footnote{\textcolor{black}{Note that the limit takes values in $\mathbb{R}\cup \{\infty\}$ a priori.}}. It follows then from the monotone convergence theorem that 
$$
\tau_X(A,[0,t]) = \int_A L(x,[0,t])dx ,
$$
so that the local time $L(x,[0,t])$ exists. This finishes the whole proof. 
\end{proof}

\begin{proof}[Proof of Proposition \ref{main3}]
We have (see Lemma 2.1, Lemma 2.2, and their proofs in \cite{knsv}) that
\begin{equation}
\label{eq:Rosenblatt-bound}
\Big\lvert  \mathbb{E} \Big[ \exp \Big( i \sum_{j=1}^m \xi_j (Z_{t_{j}}-Z_{t_{j-1}})\Big) \Big] \Big\rvert  = \prod_{k  \geq 1} \frac{1}{( 1 + 4 \lambda_k^2)^{1/4}},
\end{equation}
where
$$
\lambda_n ( B_{t, \xi}) \textcolor{black}{\geq} C(H) (\max_{1 \leq j \leq n} \lvert \xi_j\rvert  \lvert  t_j - t_{j-1}\rvert ^H ) \tilde{\mu}_n^2
$$
with $\tilde{\mu}_n \sim n^{- H/2}$. By observing that, for any integers $k_j\in\{0,4\}$, we have
\begin{align*}
&\prod_{j=1}^n \lvert \xi_j\rvert ^{k_j}\lvert t_j-t_{j-1}\rvert ^{k_j H} \\
&\leq \left(1+\max_{1\leq j\leq n}(\lvert \xi_j\rvert ^2\lvert t_j-t_{j-1}\rvert ^{2H})\right)^{\frac{n}{2}}
\end{align*}
we see that, for any constant $c_1$,
\begin{align*}
&\left(1+c_1\max_{1\leq j\leq n}(\lvert \xi_j\rvert ^2\lvert t_j-t_{j-1}\rvert ^{2H})\right)^{-\frac14}\\
&\leq c_2 \left(\prod_{j=1}^n \lvert \xi_j\rvert ^{k_j}\lvert t_j-t_{j-1}\rvert ^{k_j H}\right)^{-\frac{1}{2n}},
\end{align*}
where $c_2$ depends only on $c_1$. Hence it suffices to take $2n$-terms into account in \eqref{eq:Rosenblatt-bound} giving us condition (A1), with $\theta=0$. Similarly, (A2) with $\iota = 1$ follows from Lemma 4.1, Proposition 4.2, and their proofs in \cite{knsv}. 
\end{proof}
\begin{proof}[Proof of Proposition \ref{main4}]
For simplicity we only prove the case $d=1$, as the general case follows from this by component-wise considerations. As $Z$ is Gaussian, it follows that
\begin{align*}
\Big\lvert  \mathbb{E} \Big[ \exp \Big( i \sum_{j=1}^m \xi_j (Z_{t_{j}}-Z_{t_{j-1}})\Big) \Big] \Big\rvert  = \exp\left(-\frac12 Var\left(\sum_{j=1}^m \xi_j (Z_{t_{j}}-Z_{t_{j-1}})\right)\right).
\end{align*}
Using the local non-determinism condition \eqref{eq:Gaussian-LND}, we have
\begin{align*}
\exp\left(-\frac12 Var\left(\sum_{j=1}^m \xi_j (Z_{t_{j}}-Z_{t_{j-1}})\right)\right) \leq \prod_{j=1}^m \exp\left(-\frac{C}{2} \xi_j^2 \E(Z_{t_j}-Z_{t_{j-1}})^2\right).
\end{align*}
For (A1), it remains to apply $e^{-\lvert x\rvert } \leq \frac{C(p)}{\lvert x\rvert ^p}$ for any $p\geq 0$, with $p=0$ or $p=2$ for each term together with \eqref{eq:Gaussian-variance}. Finally, (A2) follows directly from \eqref{eq:Gaussian-variance} together with the fact that $Z$ is Gaussian. 
\end{proof}

\subsection*{Acknowledgments}
Large parts of this research were carried out while ES was visiting Aalto University School of Science and the University of Vaasa. ES would like to thank both universities for their hospitality.

\bibliographystyle{amsplain}
\bibliography{lit}

\end{document}